\newtheorem{theorem}{Theorem}
\newtheorem{lem}[theorem]{Lemma}
\newtheorem{cor}[theorem]{Corollary}
\newtheorem{conj}[theorem]{Conjecture}
\newtheorem{quest}[theorem]{Question}
\newtheorem{defn}[theorem]{Definition}
\newtheorem{theoremx}{Theorem}
\newtheorem{corx}[theoremx]{Corollary}
\numberwithin{theorem}{section}
\begin{document}
\title{Perfect Prishchepov groups}
\author{Ihechukwu Chinyere\thanks{Corresponding author} \ and Bernard Oduoku Bainson}

\maketitle

\begin{abstract}
We study cyclically presented groups of type $\mathfrak{F}$ to determine when they are perfect. It turns out that to do so, it is enough to consider the Prishchepov groups, so modulo a certain conjecture, we classify the perfect Prishchepov groups $P(r,n,k,s,q)$ in terms of the defining integer parameters $r,n,k,s,q$. In particular, we obtain a classification of the perfect Campbell and Robertson's Fibonacci-type groups $H(r,n,s)$, thereby proving a conjecture of Williams,  and yielding a complete classification of the groups $H(r,n,s)$ that are connected Labelled Oriented Graph groups. 
\end{abstract}

\noindent \textbf{Keywords:} Prishchepov groups, cyclically presented groups, perfect groups, LOG groups.\newline
\noindent \textbf{MSC:} $12\text{E}05, 20\text{F}05, 20\text{F}65, 57\text{M}07, 57\text{M}15$.

\section{Introduction}\label{sec:Introdution}

Let $F_n$ be the free group of rank $n$ with basis $X=\lbrace x_0,x_1,\cdots,x_{n-1}\rbrace$. Let $\theta$ denote the shift automorphism of $F_n$ of order $n$ such that $x_i\mapsto x_{i+1} ~(0\leq i<n)$, where the subscripts are taken modulo $n$. A group presentation is called a \emph{cyclic presentation} if it is of the form	
\begin{equation}\label{eqn:Cyclicpresentation}
P_n(w)=\langle x_0,x_1,\cdots,x_{n-1}~|~\ w,\theta (w), \cdots , \theta^{n-1} (w)\rangle
\end{equation}
for some positive integer $n$, where $w$ is a (cyclically) reduced word on  $X^{\pm 1}=\lbrace x_0^{\pm 1},x_1^{\pm 1},\cdots,x_{n-1}^{\pm 1}\rbrace$ (see \cite{johnson1974fibonacci}). A group  is said to be \emph{cyclically presented} if it has a presentation of the form  (\ref{eqn:Cyclicpresentation}), in which case the group is often denoted by $G_n(w)$. 
The shift defines an automorphism of
$G_n(w)$ with exponent $n$ and the resulting $\mathbb{Z}_n$-action on $G_n(w)$ determines the
shift extension $E_n(w)= G_n(w)\rtimes_{\theta}\mathbb{Z}_n,$ which admits a 
balanced presentation of the form
\begin{equation*}
E_n(w)= \langle x,t~|~t^n, W\rangle,
\end{equation*}
where $W=W(x, t)$ is obtained by rewriting $w$ in terms of the substitutions
$x_i=t^ixt^{-i}$ (see \cite[Theorem 4]{johnson1974fibonacci}), and the shift on $G_n(w)$ is then
realized by conjugation by $t$ in $E_n(w)$. The group $G_n(w)$ is recovered as the
kernel of the retraction $\nu^0: E_n(w) \longrightarrow \mathbb{Z}_n=\langle t~|~t^n\rangle$, such that $x$ maps to the identity.

\medskip
Cyclically presented groups have been an object of various investigations not just from the algebraic point of view as in \cite{bogley2017coherence,campbell1975metacyclic,cavicchioli2008some,chinyere2020hyperbolicity,
chinyere2020hyperbolic,edjvet2017infinite,williams2012largeness} for example, but also from topological perspectives due to their close connections with the topology of closed orientable $3$-manifolds (see for example, \cite{cannon2016bitwist, cavicchioli1998geometric,cavicchioli1996manifold,  cavicchioli2005families,howie2017fibonacci,sieradski1986combinatorial,telloni2010combinatorics}). This article  takes the former viewpoint and concerns the family of cyclically presented groups of type $\mathfrak{F}$ from \cite{bogley2017coherence}, which are defined by presentations
\begin{equation}\label{eqn:Prischepovpresentation}
\mathcal{P}=\Bigg\langle x_0, \cdots , x_{n-1} ~\Bigg |~\prod^{r-1}_{i=0}x_{j+iq}\Bigg(\prod^{s-1}_{i=0}x_{j+k-1+iq}\Bigg)^{\epsilon} ~(0\leq j<n) \Bigg\rangle,
\end{equation}
where $k,q,r,s\geq 1$ and $n\geq 2$ are integers, $\epsilon\in \lbrace\pm 1\rbrace$ and subscripts taken modulo $n$. Our aim is to determine when cyclically presented groups of type $\mathfrak{F}$ are perfect or trivial. According to \cite[Lemma 1]{bogley2017coherence}, a group with presentation $\mathcal{P}$ is the
kernel of the retraction \[\nu^q: E=\langle y,t~|~t^n, y^rt^Ay^{\epsilon s}t^{-B}\rangle \longrightarrow \mathbb{Z}_n=\langle t~|~t^n\rangle,\] such that $\nu^q(x)=t^q$ and $\nu^q(t)=t$; and the parameters $r,n,k,s,q,\epsilon,A,B$ are related by the relations  $q(r+\epsilon s)\equiv (B - A)\bmod n$ and 
\begin{equation*}
k\equiv \left\{
  \begin{array}{@{}ll@{}}
    (qr+A+1)\bmod n, & \text{if}\ \epsilon=1 \\
    (q(r-s)+A+1)\bmod n, & \text{if} \ \epsilon=-1\\
  \end{array}\right.
\end{equation*}
where the second relation comes from rewriting as in \cite{bogley2017coherence,prishchepov1995aspherisity, williams2012largeness}. The Prishchepov groups $P(r,n,k,s,q)$ are of principal interest in this article and they arise as the groups defined by $\mathcal{P}$ in the special case where $\epsilon=-1$, which means that 
\begin{align*}
A&\equiv (k-q(r-s)-1)\bmod n, ~\text{and}\\
B&\equiv (k-1)\bmod n.
\end{align*}
As the name implies the  Prishchepov groups were first considered in \cite{prishchepov1995aspherisity} by Prishchepov; and later by others (see for example \cite{cavicchioli2003topological,Fulvia2006,telloni2010combinatorics,williams2012largeness}, noting that the roles of $A$ and $B$ are interchanged in \cite{Fulvia2006,williams2012largeness}).  For particular choices of parameters these groups are known: Conway’s Fibonacci groups $F(2,n)=P(2, n, 3, 1, 1)$ of \cite{conway1965advanced},  the groups of Fibonacci type $G_n (m, k) = P (2, n, k+1 , 1, m)$ of  \cite{cavicchioli1996manifold,johnson1975some}  and more (see \cite{campbell1975metacyclic,campbell1975note,campbell1975class,
campbell1979finitely,gilbert1995log,sieradski1986combinatorial}). Therefore the Prishchepov groups provide a means to obtain a generalized treatment of these groups. More important to us in this article is that to classify the perfect cyclically presented groups of type $\mathfrak{F}$, it is enough to do so for the Prishchepov groups (see Lemma \ref{lem:r-s}), hence the rest of this work focuses on the groups $P(r,n,k,s,q)$. 

\medskip
In describing the properties of $P(r,n,k,s,q)$ in terms of the parameters $k,n,r,s,q$ it is often helpful to make the following assumptions. Firstly, we can assume that $r\geq s$ since the groups $P(r,n,k,s,q)$ and $P(s,n,n-k+2,r,q)$ are isomorphic by \cite[Lemma 14]{williams2012largeness}. For the second let $N=n/(n,q,k-1)$, $Q=q/(n,q,k-1)$ and $K-1\equiv ((k-1)/(n,q,k-1))\bmod N$. Then as described in \cite{MR1961572} (see also, \cite[Corollary 3]{williams2012largeness}) the group $P(r,n,k,s,q)$ decomposes as a free product of $(n,q,k-1)$ copies of $P(r,N,K,s,Q)$. In particular $P(r,n,k,s,q)$ is perfect (resp. trivial) if and only if  $P(r,N,K,s,Q)$ is perfect (resp. trivial). Hence, when suitable we may assume that $(n,q,k-1)=1$, which is the same as saying that $\mathcal{P}$ as given in (\ref{eqn:Prischepovpresentation}) is \emph{irreducible}. We may further assume that $q=1$,  since $P(r,n,k,s,q)$ being perfect and $(n,q,k-1)=1$ implies $(n,q)=1$ (see Lemma \ref{lem:gcd(n,q)dividesk-1}), in which case $P(r,n,k,s,q)\cong P(r,n,\hat{q}(k-1)+1,s,1)$ where $q\hat{q}\equiv 1\bmod n$ by \cite[Theorem 16]{williams2012largeness}. The results in the general setting can then be obtained as corollaries of the ones with the above assumptions.

\medskip
Two subfamilies of $P(r,n,k,s,q)$ play a vital role in this article for completely opposite reasons as we shall see. These are the groups $H(r, n, s) = P(r, n, r + 1, s, 1)$ of \cite{campbell1975class}, and the  generalized Sieradski groups $S(r, n) = P(r, n, 2, r -1, 2), ~ (r \geq  2)$ of \cite{cavicchioli1998geometric}. In fact, the main motivation for attempting to classify the perfect cyclically presented groups of type $\mathfrak{F}$ is the following conjecture by  Williams  regarding the perfectness of the groups $H(r,n,s)$. 

\begin{conj}\cite[Conjecture 1]{williams2019generalized}\label{conj:WilliamsConjecture}
	Let  $r,s\geq 1$ and $n\geq 2$ such that $r\not\equiv 0\bmod n$, $s\not\equiv 0\bmod n$. Then $H(r,n,s)^{ab}\neq 1$. 
\end{conj}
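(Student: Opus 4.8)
The plan is to pass to the abelianization through the representer polynomial and reduce the statement to a single non-vanishing claim about a cyclotomic norm. Since $H(r,n,s)=P(r,n,r+1,s,1)$, abelianizing the relators of $\mathcal{P}$ turns $x_j\cdots x_{j+r-1}=x_{j+r}\cdots x_{j+r+s-1}$ into a circulant system with symbol $f(t)=\sum_{i=0}^{r-1}t^i-\sum_{i=0}^{s-1}t^{r+i}$, so $H(r,n,s)^{ab}\cong \mathbb{Z}[t]/(f(t),t^n-1)$ and, when this is finite, $|H(r,n,s)^{ab}|=|\mathrm{Res}(t^n-1,f)|=\prod_{d\mid n}|N_{\mathbb{Q}(\zeta_d)/\mathbb{Q}}(f(\zeta_d))|$. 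I would first dispose of the generic cases by elementary homomorphisms: sending every $x_i$ to a fixed $g$ sends each relator to $(r-s)g$, so for $r=s$ the choice $g=1\in\mathbb{Z}$ yields a surjection onto $\mathbb{Z}$, and for $|r-s|\ge 2$ the choice $g=1\in\mathbb{Z}_{|r-s|}$ yields a nontrivial quotient. Using the reduction $r\ge s$, this leaves only $r=s+1$.

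For $r=s+1$ I would record the key identity
\begin{equation*}
f(t)=1-t(t-1)\Big(\textstyle\sum_{i=0}^{s-1}t^i\Big)^2,
\end{equation*}
where $B(t)=\sum_{i=0}^{s-1}t^i$, which makes the degenerate evaluations transparent: if $\zeta^s=1$ then $f(\zeta)=1$, and if $\zeta^{s+1}=1$ then $f(\zeta)=\zeta^{-1}$. Consequently, whenever $n\mid s$ or $n\mid (s+1)$ every factor $N(f(\zeta_d))$ is a unit and $H(r,n,s)^{ab}=1$; these are exactly the cases excluded by the hypotheses $s\not\equiv 0$ and $r=s+1\not\equiv 0\bmod n$. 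It remains to prove the converse, that the order is genuinely $>1$ otherwise.

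The main step, and the principal obstacle, is to show that $f(\zeta_n)$ is \emph{not} a unit in $\mathbb{Z}[\zeta_n]$ whenever $n\nmid s$ and $n\nmid (s+1)$; granting this, $|N(f(\zeta_n))|>1$ is a factor of $|H(r,n,s)^{ab}|$ and we are done. Here I would exploit two structural facts. First, reducing indices modulo $d$ gives shift-compatible surjections $H(r,n,s)\twoheadrightarrow H(r,d,s)$ for every $d\mid n$, so $|H(r,d,s)^{ab}|$ divides $|H(r,n,s)^{ab}|$ and one may assume $n$ minimal subject to $n\nmid s,\ n\nmid(s+1)$, i.e. every proper divisor of $n$ divides $s$ or $s+1$. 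Second, I would analyse $N(f(\zeta_n))$ through the identity above, splitting on whether $n$ is a prime power: when $n$ is not a prime power, $1-\zeta_n$ is a unit, which simplifies $\zeta(\zeta-1)B(\zeta)^2$ to a unit multiple of $(\zeta_n^s-1)^2$ and reduces the question to the valuation of $N(\zeta_n^s-1)$, whereas for $n=\ell^a$ one localises at the prime above $\ell$ and tracks the $\ell$-adic valuation of $f(\zeta_n)-1=-\zeta(\zeta-1)B(\zeta)^2$.

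I expect this final point to be the hard part: examples such as $|H(3,7,2)^{ab}|=43$ show that the prime witnessing non-triviality of $N(f(\zeta_n))$ can be large and is not detected by any fixed small modulus, so the argument must be intrinsic rather than a single reduction modulo a fixed prime. A complementary check I would keep in hand is analytic: $f$ has no roots on the unit circle and has a real root $\rho>1$, so $|H(r,n,s)^{ab}|=\prod_i|\beta_i^{\,n}-1|$ grows with $n$ and can equal $1$ only for small $n$; the real content of the theorem is that the hypotheses rule out precisely those small exceptional $n$.
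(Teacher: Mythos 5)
Your setup is sound and consistent with the paper's: the representer polynomial is correct, the reduction to $r=s+1$ via the cyclic quotient $\mathbb{Z}_{|r-s|}$ (resp.\ $\mathbb{Z}$) is exactly Lemma 2.1 of the paper, and your identity $f(t)=1-t(t-1)\bigl(\sum_{i=0}^{s-1}t^i\bigr)^2$ checks out. But the argument has a genuine gap at precisely the point you flag: you never prove that $f(\zeta_n)$ fails to be a unit in $\mathbb{Z}[\zeta_n]$ when $n\nmid s$ and $n\nmid(s+1)$, and that claim \emph{is} the conjecture. The attack routes you sketch (localising at $\ell$ for $n=\ell^a$, tracking the valuation of $N(\zeta_n^s-1)$, using that $1-\zeta_n$ is a unit for composite $n$) are not likely to close it, for the reason you yourself observe: the prime witnessing $|N(f(\zeta_n))|>1$ (e.g.\ $43$ for $n=7$, $s=2$) bears no controlled relation to $n$ or $s$, so no valuation at a prime determined by $n$ can detect it. The Mahler-measure fallback likewise gives growth in $n$ but no effective bound eliminating all small $n$.

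The paper closes this gap not by arithmetic in $\mathbb{Z}[\zeta_n]$ but by positivity: a direct computation (Theorem 3.1 together with Lemma 3.2) shows that for every nonreal $n$th root of unity $\lambda$,
\begin{equation*}
f(\lambda)f(\bar{\lambda}) \;=\; 1+2F(\lambda)F(\bar{\lambda})\bigl[1-\Re(\lambda^{r})\bigr], \qquad F(t)=\textstyle\sum_{i=0}^{s-1}t^{i},
\end{equation*}
which is a real number $\geq 1$. Since $|H(r,n,s)^{ab}|$ is the product of these conjugate-pair factors (together with $|f(\pm1)|$), perfectness forces every factor to equal $1$, hence $\lambda^{r}=1$ or $F(\lambda)=0$ (i.e.\ $\lambda^{s}=1$) at a primitive $\lambda$, giving $n\mid r$ or $n\mid s$. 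Your identity could in fact be pushed to the same conclusion --- expanding $|1-\zeta(\zeta-1)B(\zeta)^2|^2$ and using $(\zeta-1)B(\zeta)=\zeta^{s}-1$ recovers the displayed formula --- but as written your proposal replaces the one hard step by an unproved number-theoretic assertion rather than resolving it.
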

Our main result completely describes when the group $H(r,n,s)$ is perfect or trivial in terms of the parameters $r,n,s$. As an immediate consequence, we obtain proof of Conjecture \ref{conj:WilliamsConjecture}. The precise statement of our main result is the following.
\begin{theoremx}\label{theorem:MainResult1}
For integers $r,s\geq 1$ and $n\geq 2$, the following are equivalent:
\begin{enumerate}
\item The group $H(r,n,s)$  is perfect;
\item The group $H(r,n,s)$ is trivial;
\item  $|r-s|=1$ and either $r\equiv 0\bmod n$ or $s\equiv 0\bmod n$.
\end{enumerate}
\end{theoremx}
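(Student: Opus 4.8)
The plan is to prove the chain of equivalences $(1)\Leftarrow(2)\Rightarrow(3)$ and $(3)\Rightarrow(2)$, since $(2)\Rightarrow(1)$ is trivial (the trivial group is perfect). The genuinely informative implications are $(1)\Rightarrow(3)$ and $(3)\Rightarrow(2)$, and I would attack the arithmetic condition in $(3)$ by working with the abelianization $H(r,n,s)^{\mathrm{ab}}$, whose order is controlled by a circulant determinant. Writing $H(r,n,s)=P(r,n,r+1,s,1)$, the defining relators are
\begin{equation*}
\prod_{i=0}^{r-1}x_{j+i}\Bigg(\prod_{i=0}^{s-1}x_{j+r+i}\Bigg)^{-1}\quad(0\le j<n),
\end{equation*}
so that in the abelianization each relator contributes the same exponent sum across a block of $r$ consecutive generators minus a block of $s$ consecutive generators. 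The associated $n\times n$ relation matrix is circulant, with representer polynomial $f(X)=1+X+\cdots+X^{r-1}-(X^r+\cdots+X^{r+s-1})$ over $\mathbb{Z}$, and $|H(r,n,s)^{\mathrm{ab}}|=\bigl|\prod_{\zeta^n=1}f(\zeta)\bigr|=|\mathrm{Res}(f(X),X^n-1)|$ when this is nonzero, with the group infinite otherwise. Thus $H(r,n,s)$ is perfect precisely when $f$ has no root among the $n$-th roots of unity and the resulting resultant equals $\pm1$.

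First I would compute $f$ in closed form. Summing the geometric blocks gives $f(X)=\dfrac{X^r-1}{X-1}-X^r\cdot\dfrac{X^s-1}{X-1}=\dfrac{(X^r-1)-X^r(X^s-1)}{X-1}$, which simplifies to
\begin{equation*}
f(X)=\frac{-X^{r+s}+2X^r-1}{X-1}=\frac{-(X^r-1)(X^s-1)+ (X^r-1)-(X^{r+s}-X^r)}{X-1}.
\end{equation*}
A cleaner route is to note $(X-1)f(X)=-X^{r+s}+2X^r-1=-(X^r-1)^2+(X^{r}-1)(X^s-\cdots)$; rather than force this factorization by hand I would instead evaluate $f$ directly at an $n$-th root of unity $\zeta\neq1$, where $f(\zeta)=\dfrac{2\zeta^r-\zeta^{r+s}-1}{\zeta-1}$, and at $\zeta=1$ separately where $f(1)=r-s$. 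The perfectness of $H(r,n,s)$ therefore forces $f(1)=r-s$ to be a unit up to the contribution of the other roots; more precisely, since the product of $f$ over all $n$-th roots of unity must be $\pm1$, and $f(1)=r-s$ is one factor, I would first isolate the case analysis on whether $1$ is a root.

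The key step is the analysis of when $f(\zeta)=0$ for $\zeta^n=1$. Setting the numerator to zero gives $2\zeta^r=1+\zeta^{r+s}$; taking moduli and using $|\zeta|=1$ forces $2=|1+\zeta^{r+s}|\le 2$ with equality iff $\zeta^{r+s}=1$, whence $\zeta^r=1$ and so also $\zeta^s=1$. Conversely if $\zeta^r=\zeta^s=1$ then $\zeta$ is a root of $f$. Hence $H(r,n,s)^{\mathrm{ab}}$ is infinite exactly when some nontrivial $n$-th root of unity satisfies $\zeta^r=\zeta^s=1$, i.e.\ when $(r,s,n)=\gcd(r,s,n)>1$, and otherwise the group is finite with order equal to the resultant. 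This modulus argument is the crux: it reduces the perfect/trivial question to the finite case and pins down the root locus. For the finite case I would then show, via the explicit resultant $\bigl|\mathrm{Res}(f,X^n-1)\bigr|$, that the order equals $1$ exactly under condition $(3)$. I expect the main obstacle to be the resultant computation: one must prove that $\prod_{\zeta^n=1}f(\zeta)=\pm1$ forces $|r-s|=1$ together with $r\equiv0$ or $s\equiv0\bmod n$, and conversely that these conditions yield the trivial group. The forward direction is delicate because the resultant is a product of $n$ algebraic factors whose individual sizes are hard to bound; my strategy is to reduce modulo the assumption $r\ge s$ (legitimate by the isomorphism $P(r,n,k,s,q)\cong P(s,n,n-k+2,r,q)$ noted in the excerpt) and to exploit the generalized Sieradski groups $S(r,n)$ and the structural reductions $(n,q,k-1)=1$, $q=1$ to force the remaining parameters, converting the resultant bound into a clean divisibility statement in $n$.
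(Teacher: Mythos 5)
Your setup is the same as the paper's --- the circulant relation matrix, the representer polynomial $f(X)=\sum_{i=0}^{r-1}X^i - X^r\sum_{i=0}^{s-1}X^i$, and the criterion $|H(r,n,s)^{\mathrm{ab}}|=\bigl|\prod_{\zeta^n=1}f(\zeta)\bigr|$ --- and your triangle-inequality observation that $f(\zeta)=0$ for $\zeta^n=1$, $\zeta\neq 1$ iff $\zeta^r=\zeta^s=1$ is correct. But that observation only characterizes when the abelianization is \emph{infinite}; it does not touch the real difficulty, which is to show that if the product is finite and equal to $1$ then $r\equiv 0$ or $s\equiv 0\bmod n$. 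You explicitly flag this as ``the main obstacle'' and propose to handle it by ``exploiting the generalized Sieradski groups $S(r,n)$'' and the reductions $(n,q,k-1)=1$, $q=1$, but no actual argument is given, and the Sieradski groups are not relevant here (they enter the paper's Theorem B, on type $\tilde{\mathfrak{Z}}$ groups, not Theorem A). So the implication $(1)\Rightarrow(3)$ is left unproved.

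The missing idea, which is the heart of the paper's proof, is a \emph{positivity} argument rather than a vanishing argument. Having first reduced to $r=s+1$ (your remark that $f(1)=r-s$ divides the integer resultant can be made to work here, as can the paper's Lemma~\ref{lem:r-s}), one writes $f(t)=F(t)(1-t^{k-1})+t^{sq}$ with $F(t)=1+t+\cdots+t^{s-1}$ and shows by direct expansion that for each nonreal $n$th root of unity $\lambda$,
\begin{equation*}
f(\lambda)f(\bar\lambda)\;=\;1+2F(\lambda)F(\bar\lambda)\bigl[1-\Re(\lambda^{r})\bigr]+2\Re\bigl(\bar\lambda\,F(\bar\lambda)[1-\lambda^{r}]\bigr),
\end{equation*}
and that the last cross term vanishes identically for $|\lambda|=1$ (Lemma~\ref{lem:Mlambdaequals0}). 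Hence every conjugate-pair factor in the product equals $1$ plus a nonnegative real, so each factor is $\geq 1$, and the product can equal $1$ only if every factor equals $1$, i.e.\ $|F(\lambda)|^2(1-\Re(\lambda^r))=0$ for every nonreal $\lambda$. Taking $\lambda$ a primitive $n$th root of unity then forces $\lambda^r=1$ or $\lambda^s=1$, giving $(3)$. Without a lower bound of this kind on the individual factors, knowing only where $f$ vanishes cannot rule out, say, two factors of modulus $1/2$ and $2$: you need the product to be a product of terms each at least $1$. I recommend you supply this computation (or an equivalent lower bound on $|f(\lambda)|$ for nonreal $\lambda$ when $r=s+1$) to close the gap; the remaining implications $(3)\Rightarrow(2)$ (via \cite[Theorem 17(b)]{williams2012largeness}, the paper's Lemma~\ref{lem:Trivialgroup}) and $(2)\Rightarrow(1)$ are handled correctly in your sketch.
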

In \cite{williams2019generalized} an incomplete classification of the groups $H(r,n,s)$  that are connected Labelled Oriented Graph (LOG) groups (see \cite{gilbert2017labelled,gilbert1995log,howie1985asphericity,howie2012tadpole,williams2019generalized} for more details about LOG groups) was given, the missing ingredient being a proof of Conjecture \ref{conj:WilliamsConjecture}. Therefore, with the conjecture now proved we have a complete classification result which can be stated as follows.

\begin{corx}\label{theorem:LOGgroup}
	Let $r,s\geq 1$  and $n\geq 2$. Then the group $H(r,n,s)$ is a connected LOG group if and only if one of the following holds:
	\begin{enumerate}
		\item $r=s$ and $(n,r)=1$, in which case $H(r,n,s)$ is the $(n,r)$-torus knot group.
		\item $(n,r,s)=2$ and either $r\equiv 0\bmod n$ or $s\equiv 0\bmod n$, in which case $H(r,n,s)\cong\mathbb{Z}$.
	\end{enumerate}
\end{corx}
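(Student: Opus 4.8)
The plan is to prove Corollary~\ref{theorem:LOGgroup} by combining Theorem~\ref{theorem:MainResult1} with the partial classification from \cite{williams2019generalized}. Since the statement is an ``if and only if'' characterizing exactly when $H(r,n,s)$ is a connected LOG group, I would first recall what is already known from Williams' work. According to the excerpt, \cite{williams2019generalized} established an \emph{incomplete} classification whose only missing ingredient was a proof of Conjecture~\ref{conj:WilliamsConjecture}. Thus the essential strategy is to show that, once the conjecture is settled (which Theorem~\ref{theorem:MainResult1} does), the incomplete list closes up into precisely the two cases stated. The bulk of the argument is therefore bookkeeping: identifying which cases in Williams' analysis were left conditional on the nonvanishing of $H(r,n,s)^{ab}$, and verifying that Theorem~\ref{theorem:MainResult1} disposes of exactly those.

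First I would invoke the general principle that a connected LOG group has infinite cyclic abelianization, so a \emph{perfect} group (other than the trivial group, which is the LOG group of a single vertex with no edges, or must be excluded) cannot be a connected LOG group unless it is trivial. This is the key structural fact linking perfectness to the LOG property: connected LOG groups have $H_1 \cong \mathbb{Z}$, hence are never perfect and never trivial in the relevant range. I would then use Theorem~\ref{theorem:MainResult1} to enumerate precisely when $H(r,n,s)$ is perfect or trivial, namely when $|r-s|=1$ together with $r\equiv 0$ or $s\equiv 0 \bmod n$. These are exactly the parameter values that Williams' partial result could not decide, and Theorem~\ref{theorem:MainResult1} shows that in those cases $H(r,n,s)^{ab}=1$, so the group is \emph{not} a connected LOG group there.

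The two surviving families then need to be identified explicitly. For case~(1), $r=s$ with $(n,r)=1$: here $H(r,n,r) = P(r,n,r+1,r,1)$ is known (from the cited LOG literature and the torus-knot identifications in \cite{williams2019generalized}) to be the $(n,r)$-torus knot group, and I would simply cite the relevant computation showing the defining relators assemble into a connected LOG presentation of the torus knot group with abelianization $\mathbb{Z}$. For case~(2), $(n,r,s)=2$ with $r\equiv 0$ or $s\equiv 0 \bmod n$: using the free-product decomposition recalled in the introduction (the reduction to $P(r,N,K,s,Q)$ via $N=n/(n,q,k-1)$ etc.) together with Theorem~\ref{theorem:MainResult1}, one checks that the abelianization is exactly $\mathbb{Z}$ and that the resulting group is isomorphic to $\mathbb{Z}$, realized as a connected LOG group (a single edge). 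I expect the main obstacle to be the precise alignment of the condition $(n,r,s)=2$ with the $|r-s|=1$ boundary from Theorem~\ref{theorem:MainResult1}: one must confirm that in case~(2) the group is cyclic rather than merely having cyclic abelianization, and that no additional parameter values sneak in or drop out when translating between the $H(r,n,s)$ normalization and the reduced Prishchepov parameters. Careful tracking of the $\gcd$ conditions and the isomorphism $H(r,n,s)\cong H(s,n,r)$-type symmetries (the $P(r,n,k,s,q)\cong P(s,n,n-k+2,r,q)$ relation cited in the introduction) will be needed to ensure the final list is exhaustive and non-redundant.
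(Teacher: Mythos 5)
Your proposal is correct and follows essentially the same route as the paper: cite Williams' Theorem A from \cite{williams2019generalized} for the classification-modulo-conjecture, and use Theorem \ref{theorem:MainResult1} to dispose of the one remaining conditional case. The only detail you could not have anticipated is that the open case in Williams' theorem concerns the halved parameters $H(r/2,n/2,s/2)$ rather than $H(r,n,s)$ itself, which makes the paper's verification a two-line contradiction argument and renders the bookkeeping you anticipate (re-identifying the torus-knot and $\mathbb{Z}$ cases, aligning $(n,r,s)=2$ with $|r-s|=1$) unnecessary, since Williams has already done it.
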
	
Next, we turn attention to the Prishchepov groups of type $\tilde{\mathfrak{Z}}$.
\begin{defn}[type $\tilde{\mathfrak{Z}}$] The group  $P(r,n,k,s,q)$ is of \emph{type $\tilde{\mathfrak{Z}}$} if any of the following holds:
\begin{align*}
\emph{type}~ \mathfrak{Z} &: q(r-s)\equiv 2(k-1)\bmod n ~(\text{eqivalently}~ A\equiv B\bmod n); or\\
\emph{type}~ \mathfrak{Z}'&: q(r+s)\equiv 0\bmod n.
\end{align*}
\end{defn}
The type \emph{$\mathfrak{Z}$} was introduced in \cite{mcdermott2017topological} and notably contains the generalized Sieradski groups. The two types are related in the sense that  $P(r,n,k,r-1,1)$ is of type $\mathfrak{Z}$ (resp. type $\mathfrak{Z}'$) if and only if $P(k-1,n,r+1,k-2,1)$ is of type $\mathfrak{Z}'$ (resp. $\mathfrak{Z}$), and Lemma \ref{lem:isoofreppoly} shows that the abelianizations of the two groups are related. Also, if $(n,q)=1$ and $P(r,n,k,s,q)$ is both of type $\mathfrak{Z}$ and of {type} $\mathfrak{Z}'$, then it can be shown that either $n$ is odd and $P(r,n,k,s,q)\cong H(r,n,s)$ or $n$ is even and $P(r,n,k,s,q)$ maps onto $H(r,n/2,s)$. The latter observation together with Theorem \ref{theorem:MainResult1}, is crucial in describing the perfect groups $P(r,n,k,s,q)$ of type $\tilde{\mathfrak{Z}}$. Indeed, part (2) of Theorem \ref{theorem:MainResult2} essentially excludes the groups that are both of type $\mathfrak{Z}$ and of type $\mathfrak{Z}'$.

 \begin{theoremx}\label{theorem:MainResult2}
Let $n\geq 2$, $k,q\geq 1$ and $r\geq s\geq 1$ be integers such that $(n,k-1,q)=1$. Suppose $P(r,n,k,s,q)$ of type $\tilde{\mathfrak{Z}}$. Then the following are equivalent:
\begin{enumerate}
\item The group $P(r,n,k,s,q)$ is perfect;
\item $|r-s|=1$, $(n,q)=1$ and $(k-1-qr,n)=1$.
\end{enumerate}
\end{theoremx}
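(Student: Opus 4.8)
The plan is to reduce the statement to a computation of the abelianization $P(r,n,k,s,q)^{\mathrm{ab}}$, which is trivial exactly when the group is perfect. Recall that for a cyclically presented group the abelianization is the cokernel of the circulant integer matrix of the relator, so its order equals $\left|\prod_{j=0}^{n-1} f(\omega^j)\right|$, where $\omega=e^{2\pi i/n}$ and $f(t)=\sum_{i=0}^{r-1}t^{iq}-t^{k-1}\sum_{i=0}^{s-1}t^{iq}$ modulo $t^n-1$ is the representer polynomial of the relator (here $\epsilon=-1$); the group is perfect iff this order is $1$. Two reductions come first. The augmentation $x_i\mapsto 1$ realizes a surjection $P(r,n,k,s,q)^{\mathrm{ab}}\twoheadrightarrow \mathbb{Z}/(r-s)\mathbb{Z}$ because $f(1)=r-s$, so perfectness forces $|r-s|=1$, and as $r\geq s$ this gives $s=r-1$. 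Moreover, by Lemma~\ref{lem:gcd(n,q)dividesk-1}, perfectness together with $(n,k-1,q)=1$ yields $(n,q)=1$; hence in both directions $(n,q)=1$ holds, and I may pass to the isomorphic group $P(r,n,\hat q(k-1)+1,s,1)$ with $q\hat q\equiv 1\bmod n$ and assume $q=1$. Multiplying the type congruences and the integer $k-1-qr$ by the unit $\hat q$ shows that the hypothesis ``type $\tilde{\mathfrak{Z}}$'' and every clause of~(2) are preserved, so it suffices to treat $q=1$; I write $a=k-1$.

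The algebraic heart of the argument is to factor $(t-1)f(t)$ modulo $t^n-1$ using the type hypothesis. A direct expansion gives $(t-1)f(t)\equiv t^r+t^a-t^{a+s}-1$, and substituting the defining congruence produces
\[
(t-1)f(t)\equiv (t^a-1)(t^{a+s}+1)\ \ (\text{type }\mathfrak{Z}),\qquad t^s(t-1)f(t)\equiv -(t^s-1)(t^{a+s}+1)\ \ (\text{type }\mathfrak{Z}'),
\]
the latter after multiplying by the unit $t^s$. Thus for $j\neq 0$ one has $f(\omega^j)=g(\omega^j)/(\omega^j-1)$ with $g$ the relevant factored numerator (up to a unit $\omega^{-js}$ of modulus $1$ in the $\mathfrak{Z}'$ case), while $f(1)=r-s$.

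I would next observe that $|r-s|=1$ together with the type congruence forces $2a\equiv \pm 1$ (type $\mathfrak{Z}$) or $2s\equiv \pm 1$ (type $\mathfrak{Z}'$) modulo $n$; since these are solvable only for odd $n$, both perfectness and condition~(2) can occur only when $n$ is odd, and the even case is settled at once with both (1) and (2) false. For odd $n$ the factor $t^{a+s}+1$ has no root among the $n$-th roots of unity, and the forced congruence also gives $(a,n)=1$, resp. $(s,n)=1$, so $f(\omega^j)\neq 0$ for all $j\neq 0$ and the abelianization is finite. Using $\prod_{j=1}^{n-1}|\omega^{jc}-1|=n$ when $(c,n)=1$ and $\prod_{j=1}^{n-1}|\omega^{jc}+1|=2^{(c,n)-1}$ for odd $n$, the product telescopes to
\[
\big|P(r,n,k,s,q)^{\mathrm{ab}}\big|=|r-s|\cdot 2^{(a+s,\,n)-1}
\]
in either type. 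Hence the group is perfect iff $|r-s|=1$ and $(a+s,n)=1$. Finally the type relation rewrites $(a+s,n)$ as $(a-r,n)=(k-1-qr,n)$ (via $a-r\equiv -(a+s)$ in type $\mathfrak{Z}$ and $a-r\equiv a+s$ in type $\mathfrak{Z}'$), which is precisely condition~(2); this establishes both implications.

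The step I expect to be most delicate is the evaluation of the root-of-unity products with full attention to greatest common divisors: one must verify that a factor $t^a-1$ or $t^s-1$ contributes a genuine zero (forcing an infinite abelianization, hence non-perfectness) exactly when the relevant gcd exceeds $1$, and that the denominator $\prod_{j=1}^{n-1}(\omega^j-1)=\pm n$ cancels cleanly against $\prod_{j=1}^{n-1}(\omega^{ja}-1)$ only under the coprimality the hypotheses in fact guarantee. A related subtlety, anticipated in the remark before the statement, is the overlap of the two types: a short argument shows that being of both types forces $2(a+s)\equiv 0\bmod n$, which together with the clause $(a+s,n)=1$ of~(2) holds only if $n\mid 2$, while $n=2$ is already excluded by the parity clash between $|r-s|=1$ and the type congruence; hence no group satisfying~(2) is simultaneously of type $\mathfrak{Z}$ and of type $\mathfrak{Z}'$. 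One may instead sidestep the type-$\mathfrak{Z}'$ computation altogether by invoking Lemma~\ref{lem:isoofreppoly} to transfer triviality of the abelianization between $P(r,n,k,r-1,1)$ and its companion type-$\mathfrak{Z}$ group $P(k-1,n,r+1,k-2,1)$, which is legitimate once $s=r-1$ has been established.
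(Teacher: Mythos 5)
Your argument is correct, but it reaches the theorem by a genuinely different route than the paper. The paper's proof of $(1)\Rightarrow(2)$ maps $P(r,n,k,r-1,q)$ onto $H(r,N,r-1)$ with $N=(n,k-1-qr)$, invokes Theorem~\ref{theorem:MainResult1} for that quotient, and uses the type congruence to force $N=1$; for $(2)\Rightarrow(1)$ it identifies $P(r,n,k,r-1,q)^{\mathrm{ab}}$ with the abelianization of a generalized Sieradski group $S(r,n)$ (or $S(p,n)$ via Lemma~\ref{lem:isoofreppoly}) and applies the criterion $(4r-2,n)=1$ of Lemma~\ref{lem:perfectSieradski}. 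You instead compute $|P^{\mathrm{ab}}|$ head-on from the root-of-unity product, using the factorization $(t-1)f(t)\equiv(t^{k-1}-1)(t^{k-1+s}+1)\bmod(t^n-1)$ and its type-$\mathfrak{Z}'$ analogue; I verified the factorizations, the two product evaluations $\prod_{j=1}^{n-1}|\omega^{jc}-1|=n$ for $(c,n)=1$ and $\prod_{j=1}^{n-1}|\omega^{jc}+1|=2^{(c,n)-1}$ for odd $n$, the deduction that the type congruence together with $|r-s|=1$ and $(n,q)=1$ forces $n$ odd and $(k-1,n)=1$ (resp. $(s,n)=1$), and the gcd identity $(k-1+s,n)=(k-1-qr,n)$ under either type congruence (including its invariance under the normalization $q\mapsto 1$), and all of these are sound. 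Your approach is more self-contained — it needs neither Theorem~\ref{theorem:MainResult1} nor the Sieradski classification, only Lemmas~\ref{lem:r-s} and~\ref{lem:gcd(n,q)dividesk-1} and the standard $q=1$ reduction — and it proves something quantitatively sharper, namely the exact order $|P^{\mathrm{ab}}|=|r-s|\cdot 2^{(k-1+s,\,n)-1}$ for every type-$\tilde{\mathfrak{Z}}$ group with $|r-s|=1$ and $(n,q)=1$. What the paper's route buys instead is structural information that it reuses elsewhere (the epimorphism onto $H(r,N,r-1)$ and the isomorphism with $S(r,n)$, the latter needed again in Corollary~\ref{cor:ToMainResult2}); note also that your factorization is essentially the same device as the paper's proof of Lemma~\ref{lem:perfectSieradski}, where $(1+t)f(t)=t^{2r-1}+1$, applied in situ rather than after normalizing to the Sieradski presentation.
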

The classification of the trivial cyclically presented groups of type $\mathfrak{Z}$ easily follows from Theorem \ref{theorem:MainResult2}, but can also be deduced from \cite[Theorem 20] {williams2012largeness}.

\begin{corx}\label{cor:ToMainResult2}
Let $n\geq 2$, $k,q\geq 1$ and $r \geq s\geq 1$ be integers such that $(n,k-1,q)=1$. Suppose $P(r,n,k,s,q)$ is of type $\mathfrak{Z}$. Then the following are equivalent:
\begin{enumerate}
\item The group $P(r,n,k,s,q)$ is trivial;
\item $|r-s|=1$ and either $k\equiv 1\bmod n$ or $k\equiv 1+q(r-s) \bmod n$.
\end{enumerate}
\end{corx}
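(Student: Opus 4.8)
The plan is to read the corollary off Theorem \ref{theorem:MainResult2}, using that a trivial group is perfect and that type $\mathfrak{Z}$ is a special case of type $\tilde{\mathfrak{Z}}$, so that Theorem \ref{theorem:MainResult2} applies verbatim under the standing hypotheses $n\geq 2$, $r\geq s\geq 1$, $(n,k-1,q)=1$. The real work lies in matching the coprimality conditions that characterise perfectness with the explicit congruences in (2), the bridge being the defining relation $q(r-s)\equiv 2(k-1)\bmod n$ of type $\mathfrak{Z}$.

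I would first record the effect of the type $\mathfrak{Z}$ relation on the two congruences in (2). Under $q(r-s)\equiv 2(k-1)\bmod n$ the conditions $k\equiv 1$ and $k\equiv 1+q(r-s)$ are equivalent, each amounting to $k\equiv 1\bmod n$; so (2) collapses to the single statement ``$|r-s|=1$ and $k\equiv 1\bmod n$''. Feeding $k-1\equiv 0$ back into the type $\mathfrak{Z}$ relation with $r-s=1$ forces $q\equiv 0\bmod n$, and together with $k-1\equiv 0\bmod n$ this is incompatible with the irreducibility hypothesis $(n,k-1,q)=1$ once $n\geq 2$. Hence (2) cannot be realised under the hypotheses, which disposes of the implication $(2)\Rightarrow(1)$ at once.

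For $(1)\Rightarrow(2)$ I would show dually that $P(r,n,k,s,q)$ is never trivial here. Triviality forces perfectness, whence Theorem \ref{theorem:MainResult2} gives $|r-s|=1$, $(n,q)=1$ and $(k-1-qr,n)=1$; since $r-s=1$ the type $\mathfrak{Z}$ relation reads $q\equiv 2(k-1)\bmod n$, and combined with $(n,q)=1$ this yields $(2(k-1),n)=1$, hence $(k-1,n)=1$ and $n$ odd (consistently, substituting $q\equiv 2(k-1)$ into $k-1-qr$ rewrites it as $-(k-1)(2s+1)\bmod n$, so $(k-1-qr,n)=1$ again forces $(k-1,n)=1$). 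This is the exact opposite of the $k\equiv 1$ demanded by (2), so the two sides of the equivalence can be reconciled only if no trivial group occurs. To complete the argument I would rule out triviality directly, either via the reduction to an $H$-group in the overlap of types $\mathfrak{Z}$ and $\mathfrak{Z}'$ together with Theorem \ref{theorem:MainResult1}, or by quoting the classification in \cite[Theorem 20]{williams2012largeness}.

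The genuine obstacle is precisely this last point. Theorem \ref{theorem:MainResult2} only certifies a trivial abelianisation, and type $\mathfrak{Z}$ does contain nontrivial perfect groups: the generalised Sieradski groups $S(r,n)=P(r,n,2,r-1,2)$ are of type $\mathfrak{Z}$ and perfect (for $n$ odd with $(1-2r,n)=1$) yet not trivial. Thus the crux is to separate ``perfect'' from ``trivial'' and to verify that the parameter values surviving the congruence analysis never collapse the whole group, rather than merely its abelianisation. Keeping the three constraints $q\equiv 2(k-1)\bmod n$, $(k-1-qr,n)=1$ and $(n,k-1,q)=1$ simultaneously in play is what makes the bookkeeping delicate, and it is here that the structural input of Theorem \ref{theorem:MainResult1} (or Williams's theorem) is indispensable.
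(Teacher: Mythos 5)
Your strategy is essentially the paper's at the decisive point: a trivial (hence perfect) type-$\mathfrak{Z}$ group is pushed through Theorem~\ref{theorem:MainResult2} and the congruence $q\equiv 2(k-1)\bmod n$ into a generalized Sieradski group $S(r,n)$ with $n$ odd and $(2r-1,n)=1$, and the entire content of the corollary is then that such a group is never trivial. Your preliminary observation that condition (2) is outright unsatisfiable under the standing hypotheses (both congruences in (2) collapse to $k\equiv 1\bmod n$, which via type $\mathfrak{Z}$ and $|r-s|=1$ forces $q\equiv 0\bmod n$ and hence $(n,k-1,q)=n\geq 2$) is correct and is a cleaner way to see that the equivalence is vacuous; the paper instead disposes of $(2)\Rightarrow(1)$ by quoting Lemma~\ref{lem:Trivialgroup} directly, which also works. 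Likewise your derivation that perfectness plus type $\mathfrak{Z}$ forces $(n,k-1)=1$ and $n$ odd matches the computation inside the proof of Theorem~\ref{theorem:MainResult2}.

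Two caveats. First, one of your two proposed ways of closing the argument --- ``the reduction to an $H$-group in the overlap of types $\mathfrak{Z}$ and $\mathfrak{Z}'$ together with Theorem~\ref{theorem:MainResult1}'' --- is not available here: only type $\mathfrak{Z}$ is hypothesised, and a perfect type-$\mathfrak{Z}$ group need not also be of type $\mathfrak{Z}'$, so that reduction does not apply. Second, the step you yourself single out as the crux --- that $S(r,n)$ is never trivial for the surviving parameters --- is asserted rather than proved. The paper closes it concretely via Theorem~\ref{inf}: $S(r,n)$ is infinite outside finitely many parameter values, and the remaining finite groups (e.g.\ $S(2,3),S(2,4),S(2,5)$ of orders $8,24,120$) are visibly nontrivial. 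Your fallback of citing \cite[Theorem 20]{williams2012largeness} is legitimate (the paper itself remarks that the corollary can be deduced from it), so the proposal stands provided you rely on that citation or on Theorem~\ref{inf} rather than on the $H$-group route; as written, the nontriviality of the Sieradski groups is the one link that is named but not actually supplied.
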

The type $\tilde{\mathfrak{Z}}$ condition assumed in Theorem \ref{theorem:MainResult2} is not a mere convenient restriction as it is satisfied in all known examples where the group $P(r,n,k,s,q)$ is perfect and nontrivial, assuming $(n,k-1,q)=1$ and $r<n$. In fact, the following  conjecture implies that under these conditions, being of type $\tilde{\mathfrak{Z}}$ is a necessary condition for $P(r,n,k,s,q)$ to be a perfect group.

\begin{conj}\label{conj:krforperfect}
Let $n,k,q\geq 1$ and $2\leq r< n$ be integers such that $(n,k-1,q)=1$. Suppose that $k\not\equiv 1\bmod n$ and $k\not\equiv 1+q\bmod n$. If $P(r,n,k,r-1,q)$ is perfect, then it is of type $\tilde{\mathfrak{Z}}$.
\end{conj}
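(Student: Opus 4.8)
The plan is to prove the contrapositive: assuming that $G:=P(r,n,k,r-1,q)$ is perfect but \emph{not} of type $\tilde{\mathfrak{Z}}$, I would show $G^{ab}\neq1$. First I would remove $q$. Since $G$ is perfect and $(n,k-1,q)=1$, Lemma~\ref{lem:gcd(n,q)dividesk-1} gives $(n,q)=1$, so by the isomorphism $P(r,n,k,r-1,q)\cong P(r,n,\hat q(k-1)+1,r-1,1)$ with $q\hat q\equiv1\bmod n$ it suffices to treat $q=1$; write $G=P(r,n,k,r-1,1)$ after this reduction. The hypotheses become $2\leq r<n$ together with $k\not\equiv1$ and $k\not\equiv2\bmod n$, while (multiplying the defining congruences by $\hat q$) type $\mathfrak{Z}$ becomes $2(k-1)\equiv1\bmod n$ and type $\mathfrak{Z}'$ becomes $2r-1\equiv0\bmod n$. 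Thus the target is: if $G$ is perfect then $2(k-1)\equiv1$ or $2r-1\equiv0\bmod n$.

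Next I would introduce the representer polynomial
\[ f(t)=\sum_{i=0}^{r-1}t^i-t^{k-1}\sum_{i=0}^{r-2}t^i\ \in\ \mathbb{Z}[t]/(t^n-1), \]
whose circulant relation matrix satisfies $\big|\det\big|=\big|\prod_{j=0}^{n-1}f(\zeta^j)\big|$ with $\zeta=e^{2\pi i/n}$; this equals $|G^{ab}|$ when finite and is $0$ otherwise. Since $f(1)=r-(r-1)=1$, grouping factors by the order $d\mid n$ of $\zeta^j$ yields
\[ |G^{ab}|=\prod_{d\mid n,\ d>1}\big|N_{\mathbb{Q}(\zeta_d)/\mathbb{Q}}\big(f(\zeta_d)\big)\big|, \]
a product of non-negative integers, so $G$ is perfect if and only if $f(\zeta_d)$ is a unit of $\mathbb{Z}[\zeta_d]$ for every $d\mid n$ with $d>1$. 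A first consequence is a root test: writing $[m]_\zeta=(\zeta^m-1)/(\zeta-1)$ so that $f(\zeta)=[r]_\zeta-\zeta^{k-1}[r-1]_\zeta$, and using $|\zeta^{k-1}|=1$, a vanishing forces $|\zeta^r-1|=|\zeta^{r-1}-1|$, which for $\zeta\neq1$ holds precisely when $\zeta^{2r-1}=1$; in that case $[r]_\zeta=-\zeta^{r}[r-1]_\zeta$, whence $f(\zeta)=-(\zeta^{r}+\zeta^{k-1})[r-1]_\zeta$ and $f(\zeta)=0$ exactly when $\zeta^{2r-1}=1$ and $\zeta^{\,k-1-r}=-1$.

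The two members of type $\tilde{\mathfrak{Z}}$ are exactly the degenerations that collapse $f(\zeta_d)$ to a cyclotomic unit, and I would record this to calibrate the converse. Under type $\mathfrak{Z}$ the element $\alpha=\zeta_d^{\,k-1}$ satisfies $\alpha^2=\zeta_d$ for every $d\mid n$, and the alternating sum collapses to $f(\zeta_d)=(\alpha^{2r-1}+1)/(\alpha+1)$, whose norm is $\pm1$ precisely under the condition $(k-1-r,n)=1$ of Theorem~\ref{theorem:MainResult2}. Under type $\mathfrak{Z}'$ one has $\zeta_d^{2r-1}=1$ for all $d\mid n$, so the formula $f(\zeta_d)=-(\zeta_d^{\,r}+\zeta_d^{\,k-1})[r-1]_{\zeta_d}$ of the previous paragraph is again a ratio of cyclotomic values with norm governed by the same gcd condition. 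These computations reprove the forward direction of Theorem~\ref{theorem:MainResult2} and, more usefully, pin down the precise shape $f(\zeta_d)$ must take in order to be a unit.

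The heart of the matter, and the step I expect to be the genuine obstacle, is the converse: assuming $2(k-1)\not\equiv1$ and $2r-1\not\equiv0\bmod n$, I must exhibit a divisor $d\mid n$, $d>1$, for which $f(\zeta_d)$ is \emph{not} a unit, i.e.\ $\big|N_{\mathbb{Q}(\zeta_d)/\mathbb{Q}}(f(\zeta_d))\big|\geq2$. This is a question about units in cyclotomic integer rings, and the difficulty is intrinsic: individual factors $|f(\zeta^j)|$ fall on both sides of $1$, so a size estimate alone cannot work—one must preclude the conspiratorial cancellation by which sub-unit and super-unit Galois orbits multiply to $\pm1$ away from the two structured loci. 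My strategy would be to proceed prime-by-prime: fix a prime $p\mid n$, reduce $f$ modulo $p$ or pass to $\mathbb{Z}[\zeta_{p}]$, and seek a congruence or $p$-adic valuation obstruction forcing $p\mid|G^{ab}|$, using $k\not\equiv1,2$ and $2\leq r<n$ to exclude the trivial collapses of $f$. Securing such an obstruction uniformly in $(r,n,k)$, outside the type $\mathfrak{Z}$ and type $\mathfrak{Z}'$ loci where $f(\zeta_d)$ is forced to be cyclotomic, is exactly where the argument resists completion, which is why the statement remains conjectural.
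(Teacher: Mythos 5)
There is a genuine gap here, and you have named it yourself: the proposal is a (correct) framework plus an explicit admission that the decisive step is missing. Your preliminary reductions are all sound — using Lemma~\ref{lem:gcd(n,q)dividesk-1} together with $(n,k-1,q)=1$ to get $(n,q)=1$ and normalise to $q=1$, translating type $\mathfrak{Z}$ and type $\mathfrak{Z}'$ into $2(k-1)\equiv 1\bmod n$ and $2r\equiv 1\bmod n$, the factorisation of $|G^{ab}|$ into norms $N_{\mathbb{Q}(\zeta_d)/\mathbb{Q}}(f(\zeta_d))$ over $d\mid n$, the computation of the vanishing locus of $f$ on the unit circle, and the identification of the two type-$\tilde{\mathfrak{Z}}$ degenerations with the Sieradski-type collapse $f(\zeta_d)=(\alpha^{2r-1}+1)/(\alpha+1)$ are all consistent with what the paper does elsewhere (compare the proof of Theorem~\ref{theorem:MainResult2} and Lemma~\ref{lem:perfectSieradski}). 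But the conjecture asserts the \emph{converse}: that away from these two loci some $f(\zeta_d)$ fails to be a unit of $\mathbb{Z}[\zeta_d]$, and your final paragraph only sketches a hoped-for prime-by-prime obstruction without producing one. A norm being $\pm1$ is not precluded by any size or congruence argument you actually carry out, so the contrapositive is never established and the proposal does not prove the statement.

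You should also be aware that this is not a statement the paper proves: it is Conjecture~\ref{conj:krforperfect}, left open by the authors, who support it only by computational verification for $n\leq 340$, by the case $r\equiv 2\bmod n$ (Theorem~\ref{theorem:Willuni} and Corollary~\ref{cortheorem:last}), and by the remark that the unit-criterion in $\mathbb{Z}[t]/\langle t^n-1\rangle$ may be the most promising line of attack — which is precisely the strategy you outline. So your setup matches the route the authors themselves suggest, and your honesty about where it stalls is accurate: the obstruction to completing the argument is exactly the possibility of ``conspiratorial'' cancellation among Galois-conjugate factors of absolute value on both sides of $1$, and neither you nor the paper resolves it.
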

The removed cases  $k\equiv 1\bmod n$ and $k\equiv 1+q\bmod n$ in Conjecture  \ref{conj:krforperfect} give rise to the trivial group by  Lemma \ref{lem:Trivialgroup}. Also if $r\equiv 0\bmod n$ and $(n,q)=1$, then $P(r,n,k,r-1,q)$ is perfect by Lemma \ref{lem:requivn}. (Note that although $(n,q)=1$ is not given as a condition in the statement of Conjecture \ref{conj:krforperfect}, if $P(r,n,k,r-1,q)$ is perfect and $(n,k-1,q)=1$, then $(n,q)=1$ by Lemma \ref{lem:gcd(n,q)dividesk-1}). Finally, Theorem \ref{theorem:MainResult3} can be used to reduce the cases where $r>n$ to the cases $r<n$. Hence, all the removed cases are well-understood. Therefore, a proof of Conjecture \ref{conj:krforperfect}, together with Theorem \ref{theorem:MainResult2}, will yield  the complete classification of perfect groups $P(r,n,k,s,q)$. Note that following the irreducible assumption $(n,k-1,q)=1$, Conjecture \ref{conj:krforperfect}  implies that for even $n\geq 2$ and $r\not\equiv 0\bmod n$, the group $P(r,n,k,r-1,q)$ is perfect if and only if the conditions of Lemma \ref{lem:Trivialgroup} hold, that is, $P(r,n,k,r-1,q)$ is the trivial group. 

\medskip
Our final result shows that to determine if $P(r,n,k,r-1,q)$ is perfect, then it can be assumed that $1\leq r\leq n$. 
\begin{theoremx}\label{theorem:MainResult3}
Let $k,q\geq 1$, $1\leq r\leq n$, $\alpha,\beta\geq 0$  and $n\geq 2$ be integers such that $(n,k-1,q)=1$. Then $P(\alpha n+r,n,k,\beta n+r-1,q)$  is perfect if and only if $\alpha=\beta$ and  $P(r,n,k,r-1,q)$  is perfect.
\end{theoremx}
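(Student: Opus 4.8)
The plan is to translate everything into abelianizations, since a group is perfect exactly when its abelianization vanishes. Writing $P(a,n,k,b,q)$ for the Prishchepov group with the displayed parameters, its abelianization is the cokernel of the circulant matrix with representer polynomial
\[
f_{a,b}(t)=\sum_{i=0}^{a-1}t^{iq}-t^{k-1}\sum_{i=0}^{b-1}t^{iq}\pmod{t^{n}-1},
\]
so $P(a,n,k,b,q)^{\mathrm{ab}}\cong\mathbb{Z}[t]/(t^{n}-1,f_{a,b}(t))$ and, by the standard circulant determinant formula together with Lemma \ref{lem:isoofreppoly}, $P(a,n,k,b,q)$ is perfect iff $\prod_{j=0}^{n-1}f_{a,b}(\omega^{j})=\pm1$, where $\omega=e^{2\pi i/n}$. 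The first step is the elementary identity in $\mathbb{Z}[t]/(t^{n}-1)$
\[
f_{\alpha n+r,\,\beta n+r-1}(t)-f_{r,\,r-1}(t)=\Sigma(t)\bigl(\alpha\,t^{rq}-\beta\,t^{(r-1)q+k-1}\bigr),\qquad \Sigma(t)=\sum_{l=0}^{n-1}t^{lq},
\]
obtained by stripping $\alpha$ and $\beta$ full length-$n$ blocks off the two products and using $t^{nq}\equiv1$; this pins down precisely where the two groups can disagree.

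Next I would use that $\Sigma(t)$ annihilates most of the spectrum. Put $d=(n,q)$. For $\zeta=\omega^{j}$ one has $\Sigma(\zeta)=0$ whenever $\zeta^{q}\neq1$, while the $\zeta$ with $\zeta^{q}=1$ are exactly the $d$-th roots of unity. Hence $f_{\alpha n+r,\beta n+r-1}(\zeta)=f_{r,r-1}(\zeta)$ at every $\zeta$ with $\zeta^{q}\neq1$, and the product of these common values, call it $C$, is the same for both groups (the index set is Galois-stable, so $C\in\mathbb{Z}$). At the remaining $d$ roots each polynomial collapses to a linear form; since $(n,k-1,q)=1$ gives $(d,k-1)=1$, the map $\zeta\mapsto\zeta^{k-1}$ permutes the $d$-th roots of unity, and $\prod_{\xi^{d}=1}(X-\xi Y)=X^{d}-Y^{d}$ yields
\[
\prod_{\zeta^{q}=1}f_{\alpha n+r,\beta n+r-1}(\zeta)=(\alpha n+r)^{d}-(\beta n+r-1)^{d},\qquad \prod_{\zeta^{q}=1}f_{r,r-1}(\zeta)=r^{d}-(r-1)^{d}.
\]
Thus $P(\alpha n+r,n,k,\beta n+r-1,q)$ is perfect iff $|C|=1$ and $\bigl|(\alpha n+r)^{d}-(\beta n+r-1)^{d}\bigr|=1$, whereas $P(r,n,k,r-1,q)$ is perfect iff $|C|=1$ and $\bigl|r^{d}-(r-1)^{d}\bigr|=1$. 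Since the factor $C$ is common, the theorem reduces to a comparison of these two $d$-th-power differences.

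The hard part is this number-theoretic comparison, and it is where the hypotheses do the work. Using the standing convention $r\ge s$ (so that the first parameter dominates the second, i.e.\ $\alpha\ge\beta$) together with $1\le r\le n$ and $s\ge1$ (which forces $r\ge2$ in the small group), I would prove
\[
\bigl|(\alpha n+r)^{d}-(\beta n+r-1)^{d}\bigr|=1\iff \alpha=\beta\ \text{and}\ \bigl|r^{d}-(r-1)^{d}\bigr|=1.
\]
Factoring $X^{d}-Y^{d}=(X-Y)(X^{d-1}+\cdots+Y^{d-1})$ with $X=\alpha n+r>Y=\beta n+r-1\ge0$, the first factor is $(\alpha-\beta)n+1$, which equals $1$ iff $\alpha=\beta$ and is $\ge n+1$ once $\alpha>\beta$; and when $\alpha=\beta$ the cofactor is a sum of $d$ positive terms, exceeding $1$ precisely when $d\ge2$ and $Y\ge1$, so $\bigl|X^{d}-Y^{d}\bigr|=1$ holds iff the corresponding small-group quantity does. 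I expect the delicate points to be the low cases ($d=1$, where the small-group quantity is identically $1$, and the boundary $r=n$) and confirming that the normalization together with $s\ge1$ eliminates the spurious solutions such as $n=2,\ \beta=\alpha+1$; the case $\alpha<\beta$ I would fold back into $\alpha>\beta$ through the isomorphism $P(a,n,k,b,q)\cong P(b,n,n-k+2,a,q)$ of \cite{williams2012largeness}. Combining the two perfectness criteria with this arithmetic equivalence then gives the statement.
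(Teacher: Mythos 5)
Your proposal is correct in its core and follows the same basic strategy as the paper --- compare the representer polynomials of the two groups at $n$th roots of unity and observe that they differ only where the full-period geometric sum $\Sigma(t)=\sum_{l=0}^{n-1}t^{lq}$ survives --- but it executes the key step differently. The paper first invokes Lemma \ref{lem:gcd(n,q)dividesk-1} together with $(n,k-1,q)=1$ to force $(n,q)=1$, so that the only exceptional root of unity is $\lambda=1$ (this is exactly Lemma \ref{lem:isomorphicab}); Corollary \ref{cor:isomorphicab} then gives the clean multiplicative relation $|P(\alpha n+r,n,k,\beta n+r-1,q)^{\mathrm{ab}}|=|(\alpha-\beta)n+1|\cdot|P(r,n,k,r-1,q)^{\mathrm{ab}}|$, from which the theorem is immediate. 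You instead keep $d=(n,q)$ general, isolate the common factor $C$ over the roots with $\zeta^q\neq1$, and compute the remaining factor as a difference of $d$th powers via the permutation $\zeta\mapsto\zeta^{k-1}$ of the $d$th roots of unity (valid since $(d,k-1)=1$). This is correct, and your arithmetic comparison of $|(\alpha n+r)^d-(\beta n+r-1)^d|$ with $|r^d-(r-1)^d|$ does go through under $r\geq2$ and $\alpha\geq\beta$: both equal $1$ exactly when $d=1$ and the linear factors are $1$. What your route buys is independence from Lemma \ref{lem:gcd(n,q)dividesk-1} (the case $d\geq2$ is excluded arithmetically rather than group-theoretically); what it costs is the extra $d$th-power bookkeeping that the paper's reduction renders unnecessary. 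One small misattribution: the criterion ``perfect iff $\prod_j f(\omega^j)=\pm1$'' comes from the circulant determinant formula alone, not from Lemma \ref{lem:isoofreppoly}.

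The ``delicate point'' you flag at $n=2$, $\beta=\alpha+1$ is genuine and is not resolved by the paper either: there $|(\alpha-\beta)n+1|=|-1|=1$, and indeed $P(2,2,3,3,1)$ is trivial while $\alpha=0\neq1=\beta$, so the statement requires the paper's standing normalization that the first parameter dominates the second (equivalently $\alpha\geq\beta$), which the paper's step ``$|(\alpha-\beta)n+1|=1$ therefore $\alpha=\beta$'' also tacitly assumes. Likewise both proofs need $r\geq2$ (equivalently $s\geq1$ in the small group), which you correctly note. So your attempt is sound to the same degree of rigour as the published argument.
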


\subsection{Method of proof}
The proofs presented in this article make use of the theory of circulant matrices. The circulant matrix $C=circ_n(c_0,c_1,\cdots,c_{n-1})$ is the $n\times n$ matrix whose first
row is $c_0,c_1,\cdots,c_{n-1}$ and where each subsequent row is a cyclic shift of its predecessor by one column. Hence, for each $0\leq i<n$, if the exponent sum of the generator $x_i $ in the word $w$ is $c_i$, then the relation matrix of the abelianization $G_n(w)^{ab}$ of $G_n(w)$ is the circulant matrix $C$.
The \emph{representer polynomial} of $C$ (alternatively of $G_n(w)^{ab}$ or $G_n(w)$) is the polynomial
\begin{equation*}
f(t)=\sum_{i=0}^{n-1}c_it^i.
\end{equation*}
It is well-known (see for example \cite[Equation 3.2.14]{MR543191} or \cite[Theorem 3, page 78]{johnson1980topics}) that the absolute value of the determinant  of $C$ is given by $\vert\prod_{\lambda^n=1}f(\lambda)\vert$, where the product is taken over all $n$th roots of unity $\lambda$. Therefore the order of  $G_n(w)^{ab}$, denoted by $|G_n(w)^{ab}|$,  is equal to the absolute value of the determinant of $C$ when it is nonzero, and $G_n(w)^{ab}$  is infinite otherwise. This is a well-known fact (see \cite[Theorem 3, page 78]{johnson1980topics}) that has been commonly used in the study of cyclically presented groups. For the Prishchepov group $P(r,n,k,s,q)$ with the cyclic presentation given in (\ref{eqn:Prischepovpresentation}) but with $\epsilon=-1$, the corresponding representer polynomial is given by:
\begin{equation}\label{eqn:representerpoly}
f(t)=\sum_{i=0}^{r-1}t^{iq}-t^{k-1}\sum_{i=0}^{s-1}t^{iq}.
\end{equation}
Therefore, if  $P(r,n,k,s,q)^{ab}$ is a finite group, then its order is is given by the formula
\begin{equation}\label{order}
|P(r,n,k,s,q)^{ab}|=\Big\vert\prod_{\lambda^n=1}f(\lambda)\Big\vert.
\end{equation} 
It is not hard to see that if the representer polynomials of $G_n(u)$ and $G_n(u')$ are equal to $\sum_{i=0}^{n-1}c_it^i$, say, then $G_n(u)^{ab}\cong G_n(u')^{ab}\cong G_n(x_0^{c_0}x_1^{c_1}\ldots x_{n-1}^{c_{n-1}})^{ab}$. In general, if $G_n(w)$ has representer polynomial $f(t)$, then  $G_n(w)^{ab}$ is isomorphic to the additive group of the quotient of the ring $\mathbb{Z}[t]$ by the ideal $\langle f(t),t^n-1\rangle$  so $G_n(w)$ is perfect (alternatively $\mathbb{Z}[t]/\langle f(t),t^n-1\rangle$ is the zero ring) if and only if $f(t)$ is a unit in the quotient ring $\mathbb{Z}[t]/\langle t^n-1\rangle$ by \cite[Theorem 4, page 78]{johnson1980topics}. This provides an alternative method of proving perfectness which is sometimes easier to use than the formula in (\ref{order}).  In particular we apply it to obtain an alternative proof of the classification of the perfect generalized Sieradski groups $S(r,n)$ (see Section \ref{sec:Generalcase}), and  we expect that it may be easier to prove Conjecture \ref{conj:krforperfect} using this approach. 

\subsection{Structure}
The rest of this article is organised as follows. In Section \ref{sec:Preliminary} we obtain various preliminary results that are used in subsequent sections. In Section \ref{sec:sec4} we prove Theorem \ref{theorem:MainResult1} and deduce Corollary \ref{theorem:LOGgroup}. In Section \ref{sec:Generalcase} we prove Theorem \ref{theorem:MainResult2} and deduce Corollary \ref{cor:ToMainResult2}. In Section \ref{sec:SEC3} we prove Theorem \ref{theorem:MainResult3}. In Section \ref{sec:conclusion} we conclude and pose a question concerning trivial Prishchepov groups.
\section{Preliminaries}\label{sec:Preliminary}
	
This section contains preliminary results that are used in the subsequent sections. In terms of the parameters $r,n,k,s,q$, these results give necessary conditions for $P(r,n,k,s,q)$ to be perfect,  sufficient conditions for $P(r,n,k,s,q)$ to be trivial, and the relationship between the abelianizations of a certain pair of Prishchepov groups.

\begin{lem}\label{lem:r-s}
	Let $k,r,s,q\geq 1$  and $n\geq 2$ be integers and let $\mathcal{P}$ be as in (\ref{eqn:Prischepovpresentation}). If  $\mathcal{P}$ defines a perfect group, then $|r+\epsilon s|=1$, so $\epsilon=-1$.
\end{lem}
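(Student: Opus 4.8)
The plan is to read off the representer polynomial of $\mathcal{P}$ and then exploit the ring-theoretic criterion for perfectness recalled in the introduction. Taking $j=0$, the defining word is $w=\left(\prod_{i=0}^{r-1}x_{iq}\right)\left(\prod_{i=0}^{s-1}x_{k-1+iq}\right)^{\epsilon}$, so the exponent sum of a generator contributes $+1$ for each index of the form $iq$ $(0\le i<r)$ and $\epsilon$ for each index of the form $k-1+iq$ $(0\le i<s)$. Reading these off (and working in $\mathbb{Z}[t]/\langle t^n-1\rangle$), the representer polynomial is
\[
f(t)=\sum_{i=0}^{r-1}t^{iq}+\epsilon\, t^{k-1}\sum_{i=0}^{s-1}t^{iq},
\]
which specialises to (\ref{eqn:representerpoly}) when $\epsilon=-1$.

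Next I would invoke the criterion recalled above: since $\mathcal{P}$ is a cyclic presentation, the group it defines is perfect if and only if $f(t)$ is a unit in the quotient ring $R=\mathbb{Z}[t]/\langle t^n-1\rangle$ (see \cite[Theorem 4, page 78]{johnson1980topics}). The key step is then to apply the evaluation homomorphism $\mathrm{ev}_1\colon R\to\mathbb{Z}$ determined by $t\mapsto 1$, which is well defined because $1^n-1=0$. A ring homomorphism carries units to units, and the only units of $\mathbb{Z}$ are $\pm 1$. Since $\mathrm{ev}_1(f)=f(1)=\sum_{i=0}^{r-1}1+\epsilon\sum_{i=0}^{s-1}1=r+\epsilon s$, perfectness forces $r+\epsilon s=\pm 1$, that is, $|r+\epsilon s|=1$.

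Finally I would rule out $\epsilon=+1$. If $\epsilon=+1$, then $r+\epsilon s=r+s\ge 2$ because $r,s\ge 1$, contradicting $|r+\epsilon s|=1$. Hence $\epsilon=-1$, as claimed.

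The argument is short, and the only points requiring care are the bookkeeping needed to extract the representer polynomial for general $\epsilon$ (the introduction records it only for $\epsilon=-1$) and the verification that $\mathrm{ev}_1$ is a genuine ring homomorphism sending the unit $f(t)$ to the unit $f(1)$; neither presents a real obstacle. One could equally avoid the ring language and instead observe that collapsing all generators to a single element exhibits a surjection $G^{ab}\twoheadrightarrow\mathbb{Z}/(r+\epsilon s)\mathbb{Z}$, so that perfectness again yields $|r+\epsilon s|=1$; this alternative makes the same point with no essential difficulty.
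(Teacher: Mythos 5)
Your argument is correct. The paper's own proof is the one you mention only in passing at the end: it adds the relations $x_i=x_0$ for $1\le i<n$, collapsing $\mathcal{P}$ to $\langle x_0 \mid x_0^{r+\epsilon s}\rangle$, a cyclic group of order $|r+\epsilon s|$ which must be trivial if the group is perfect; the conclusion $\epsilon=-1$ then follows from $r,s\ge 1$ exactly as in your last step. Your headline route --- reading off the representer polynomial $f(t)=\sum_{i=0}^{r-1}t^{iq}+\epsilon\,t^{k-1}\sum_{i=0}^{s-1}t^{iq}$, invoking the unit criterion in $\mathbb{Z}[t]/\langle t^n-1\rangle$, and applying the evaluation homomorphism at $t=1$ --- is a ring-theoretic rephrasing of the same collapse (evaluation at $t=1$ is precisely the quotient identifying all generators on the level of abelianizations), so the two proofs buy the same thing; the paper's version is slightly more economical because it needs none of the machinery of the introduction, while yours has the minor virtue of fitting this lemma into the representer-polynomial framework used throughout the rest of the paper. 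Both are complete and correct.
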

\begin{proof}
	By adding the relations $x_i=x_0$ for $i=1,2,\cdots,n-1$ to $\mathcal{P}$, the resulting presentation is equivalent to $\langle x_0~|~x_0^{r+\epsilon s} \rangle $, which defines the cyclic group of order $|r+\epsilon s|$. Therefore if $\mathcal{P}$ defines a perfect group, then $|r+\epsilon s|=1$, so $\epsilon=-1$ since $r,s\geq 1$. 
\end{proof}

When $\epsilon=-1$ the presentation $\mathcal{P}$ in (\ref{eqn:Prischepovpresentation}) defines the Prishchepov groups $P(r,n,k,s,q)$. The next result gives a sufficient condition for $P(r,n,k,s,q)$ to be the trivial group.
\begin{lem}(\cite[Theorem 17(b)]{williams2012largeness})\label{lem:Trivialgroup}
	Let $k,r,s,q\geq 1$  and $n\geq 2$ be integers. If  $|r-s|=1$ and either $k\equiv 1\bmod n$ or $k\equiv 1+q(r-s)\bmod n$, then $P(r,n,k,s,q)$ is trivial. 
\end{lem}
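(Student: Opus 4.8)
The plan is to prove triviality directly from the cyclic presentation (\ref{eqn:Prischepovpresentation}) with $\epsilon=-1$, by showing that each hypothesis forces every generator to equal the identity. Writing the relator indexed by $j$ as the equation $x_j x_{j+q}\cdots x_{j+(r-1)q}=x_{j+k-1}x_{j+k-1+q}\cdots x_{j+k-1+(s-1)q}$ (subscripts mod $n$), the key observation is that when $|r-s|=1$ the two sides differ by exactly one factor, so under the stated congruences on $k$ the long products telescope and leave a single generator equated to $1$.

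First I would reduce to the case $r\geq s$, hence $r=s+1$, using the isomorphism $P(r,n,k,s,q)\cong P(s,n,n-k+2,r,q)$ from \cite[Lemma 14]{williams2012largeness}. A short check shows this substitution $(r,k,s)\mapsto(s,\,n-k+2,\,r)$ carries the hypothesis $k\equiv 1\bmod n$ to $n-k+2\equiv 1\bmod n$ and the hypothesis $k\equiv 1+q(r-s)\bmod n$ to $n-k+2\equiv 1+q(s-r)\bmod n$, so both hypotheses are preserved and the reduction is legitimate.

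With $r=s+1$ I would then split into the two cases. If $k\equiv 1\bmod n$ then $x_{j+k-1+iq}=x_{j+iq}$, so the right-hand product is precisely the length-$s$ prefix of the left-hand product and the relator collapses to $x_{j+sq}=1$. If instead $k\equiv 1+q(r-s)=1+q\bmod n$ then the offset $k-1\equiv q$ shifts the right-hand product to $x_{j+q}x_{j+2q}\cdots x_{j+sq}$, which is the length-$s$ suffix of the left-hand product, so the relator collapses to $x_j=1$. In either case the relevant index map ($j\mapsto j+sq$, respectively $j\mapsto j$) is a bijection of $\mathbb{Z}_n$ as $j$ ranges over $0,\dots,n-1$, so every generator is trivial and hence $P(r,n,k,s,q)=1$.

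The computations are elementary, so there is no serious obstacle; the only points requiring care are the modular bookkeeping of the subscripts, ensuring that the two products really do share all but one factor, and the verification that the reduction isomorphism preserves both congruence hypotheses on $k$. One could equally avoid the \emph{WLOG} step and treat $s=r+1$ directly by the same telescoping, which by symmetry yields $x_{j+rq}=1$ or $x_{j-q}=1$.
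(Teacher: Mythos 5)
Your proof is correct. Note, however, that the paper does not prove this lemma at all: it is imported verbatim as \cite[Theorem 17(b)]{williams2012largeness}, so there is no internal argument to compare against. What you have written is a self-contained elementary replacement for that citation, and it checks out: with $\epsilon=-1$ the relator indexed by $j$ is exactly the equation $x_jx_{j+q}\cdots x_{j+(r-1)q}=x_{j+k-1}\cdots x_{j+k-1+(s-1)q}$, and when $r=s+1$ the congruence $k\equiv 1\bmod n$ makes the right-hand side the length-$s$ prefix of the left (giving $x_{j+sq}=1$), while $k\equiv 1+q\bmod n$ makes it the length-$s$ suffix (giving $x_j=1$); since $j\mapsto j+sq$ is a translation of $\mathbb{Z}_n$ it is automatically a bijection, so all generators die. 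Your verification that the swap $(r,k,s)\mapsto(s,\,n-k+2,\,r)$ from \cite[Lemma 14]{williams2012largeness} carries each congruence hypothesis to the corresponding one for the swapped parameters is also right, and your closing remark that one can instead handle $s=r+1$ directly by the same telescoping (obtaining $x_{j+rq}=1$ or $x_{j-q}=1$) means the argument does not even need that isomorphism. The only thing your write-up buys beyond the paper is independence from the external reference; the only thing the citation buys is brevity.
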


As earlier mentioned to classify the perfect groups $P(r,n,k,s,q)$ it is enough to restrict to the case where $(n,q,k)=1$. With this restriction, it becomes a necessary condition for $P(r,n,k,s,q)$ to be perfect that $k\equiv 1\bmod (n,q)$ as the following result shows.

\begin{lem}\label{lem:gcd(n,q)dividesk-1}
	Let $n,k,r,s,q\geq 1$ be integers. If $P(r,n,k,s,q)$ is perfect, then $k\equiv 1\bmod (n,q)$.
\end{lem}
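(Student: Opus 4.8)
The plan is to exploit the representer-polynomial description of the abelianization together with the fact that perfectness is detected after reduction modulo a divisor of $t^n-1$. Write $d=(n,q)$ and let $f(t)=\sum_{i=0}^{r-1}t^{iq}-t^{k-1}\sum_{i=0}^{s-1}t^{iq}$ be the representer polynomial of $P(r,n,k,s,q)$ from (\ref{eqn:representerpoly}). Since the group is perfect, $f(t)$ is a unit in $R=\mathbb{Z}[t]/\langle t^n-1\rangle$. Because $d\mid n$ we have $t^d-1\mid t^n-1$, so there is a surjective ring homomorphism $R\twoheadrightarrow \bar R=\mathbb{Z}[t]/\langle t^d-1\rangle$ under which the image $\bar f(t)$ of $f(t)$ is again a unit.

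First I would simplify $\bar f$. Since $d=(n,q)$ divides $q$, we have $t^{iq}\equiv 1$ in $\bar R$ for every $i$, so $\bar f(t)\equiv r-s\,t^{\,m}$, where $m\equiv(k-1)\bmod d$ with $0\le m<d$. Thus it suffices to show that $r-s\,t^m$ can be a unit in $\bar R$ only when $m=0$, since $m=0$ is precisely the assertion $k\equiv 1\bmod(n,q)$. Equivalently, passing to the associated circulant (or to the cyclically presented group $G_d(x_0^r x_m^{-s})$, whose abelianization is a quotient of $P(r,n,k,s,q)^{ab}$ and hence trivial), the determinant condition forces
\[
\Big|\prod_{\lambda^d=1}\big(r-s\lambda^{m}\big)\Big|=1.
\]

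Next I would evaluate this product. Putting $g=(m,d)$ and $N=d/g$, as $\lambda$ runs over the $d$-th roots of unity the power $\lambda^m$ runs $g$ times over the $N$-th roots of unity, and using $\prod_{\mu^N=1}(r-s\mu)=r^N-s^N$ one obtains $\prod_{\lambda^d=1}(r-s\lambda^m)=(r^N-s^N)^g$. The claim then reduces to the elementary fact that, for integers $r,s\ge 1$ and $N\ge 2$, one has $|r^N-s^N|\ne 1$: if $r=s$ the value is $0$, while if $r\ne s$ then $|r^N-s^N|=|r-s|\sum_{i=0}^{N-1}r^{N-1-i}s^{i}\ge N\ge 2$. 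Hence $|(r^N-s^N)^g|=1$ forces $N=1$, i.e. $g=d$, i.e. $m=0$, giving $k\equiv 1\bmod(n,q)$. I expect the only real care is in the reduction step—justifying that $f$ collapses to $r-s\,t^m$ and that the resulting circulant determinant must be $\pm1$—whereas the final inequality is routine; the degenerate case $r=s$ (product $0$, forcing an infinite abelianization) should be recorded separately so that it too rules out $m\neq 0$.
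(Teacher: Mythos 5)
Your proof is correct. It rests on the same underlying reduction as the paper's: pass to the quotient obtained by reading subscripts modulo $d=(n,q)$, where $t^{iq}\equiv 1$ and the relator collapses to $x_j^r x_{j+k-1}^{-s}$, and then show this quotient cannot be perfect unless $d\mid k-1$. The difference is in how that last step is executed. The paper first invokes Lemma~\ref{lem:r-s} to get $|r-s|=1$, hence $(r,s)=1$, decomposes the quotient as a free product of $(d,k-1)$ copies of $G_{N}(x_0^r x_1^{-s})$, and cites the structure result $G_{N}(x_0^r x_1^{-s})\cong\mathbb{Z}_{|r^{N}-s^{N}|}$ from Bogley--Williams. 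You instead stay at the level of the representer polynomial and compute the circulant determinant $\prod_{\lambda^d=1}(r-s\lambda^m)=(r^N-s^N)^g$ directly, then use the elementary bound $|r^N-s^N|\neq 1$ for $N\geq 2$. Your route is slightly more self-contained: it needs neither the external isomorphism nor the preliminary $|r-s|=1$ (so no appeal to Lemma~\ref{lem:r-s}), at the cost of a short explicit product computation; the paper's route buys a cleaner group-theoretic picture of what the quotient actually is. Both are sound, and your handling of the degenerate cases ($r=s$ giving determinant $0$, and $m=0$ giving $N=1$) is as it should be.
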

\begin{proof}
	Let $P(r,n,k,s,q)$ be a perfect group. Then  $|r-s|=1$ by Lemma \ref{lem:r-s}. Let $N$ be any divisor of $(n,q)$. Then the map from the group  $P(r,n,k,s,q)$ to the group $P(r,N,k,s,q)$ sending the generator $x_i$ to the generator $x_{i\bmod N}$ is a surjective group homomorphism. The group $P(r,N,k,s,q)$ has a  presentation $P$ of the form 
	\begin{align*}
	P&=\Bigg\langle x_0, \cdots , x_{N-1} ~\Big |~\prod^{r-1}_{i=0}x_{j+iq}=\prod^{s-1}_{i=0}x_{j+k-1+iq} ~~(0\leq j<N) \Bigg\rangle
	\end{align*}
where the subscripts are taken modulo $N$. As $q\equiv 0\bmod N$, we have that
		\begin{align*}
	P&=\langle x_0,x_1,\cdots,x_{N-1} ~|~ x_j^{r }=x_{j+k-1}^{s} ~~(0\leq j<N)\rangle\\
	&=\langle x_0,x_1,\cdots,x_{N-1} ~|~ x_j^{r }x_{j+k-1}^{-s} ~~(0\leq j<N)\rangle.
	\end{align*}
	Suppose  that  $k-1\not \equiv 0\bmod N$. Then in particular $N>1$. Let $m=N/(N,k-1)>1$.
	Note that $P$ is a presentation of the free product of $(N,k-1)$ copies of the group $G_{m}(x_0^{r}x_{1}^{-s})$. Since $(r,s)=1$, we have $G_{m}(x_0^{r}x_{1}^{-s})\cong \mathbb{Z}_{|r^m-s^m|}$ by  \cite[Lemma 6]{bogley2017coherence}. Hence $P(r,N,k,s,q)$ is not perfect since $|r^m-s^m|>1$. Therefore $k-1 \equiv 0\bmod N$, as required.
\end{proof}
Theorem \ref{theorem:MainResult3}  gives a connection between the abelianizations of $P(r,n,k,s,q)$ and $P(r',n,k,s',q)$ when $r\equiv r'\bmod n$ and $s\equiv s'\bmod n$. We will deduce Theorem \ref{theorem:MainResult3} from Corollary \ref{cor:isomorphicab}. The main ingredient used in the proof of the latter is the following.

\begin{lem}\label{lem:isomorphicab}
Let $n,q,s,s'\geq 1$ be integers. If $s-s'=\alpha n$ where $\alpha>0$, $(n,q)=1$, $\lambda\neq 1$, and $\lambda^n=1$, then 
\[\sum_{i=0}^{s-1}\lambda^{iq}=\sum_{i=0}^{s'-1}\lambda^{iq}.\]
\end{lem} 
\begin{proof}
Assume $s\geq s'$. Since $(n,q)=1$, we have $\lambda^q\neq 1$ so
	\begin{align*}
	\sum_{i=0}^{s-1}\lambda^{iq}&=\sum_{i=0}^{s'-1}\lambda^{iq}+\sum_{i=s'}^{s-1}\lambda^{iq}\\
	&=\sum_{i=0}^{s'-1}\lambda^{iq}+\lambda^{s'q}\sum_{i=0}^{s-s'-1}\lambda^{iq}\\
	&=\sum_{i=0}^{s'-1}\lambda^{iq}+\lambda^{s'q}\sum_{i=0}^{\alpha n-1}\lambda^{iq}\\
	&=\sum_{i=0}^{s'-1}\lambda^{iq}+\lambda^{s'q}(\dfrac{\lambda^{\alpha n q}-1}{\lambda^q-1})\\
	&=\sum_{i=0}^{s'-1}\lambda^{iq}.
	\end{align*}
\end{proof}

\begin{cor}\label{cor:isomorphicab}
	Let $r,s,k,q\geq 1$ and $n\geq 2$. Let $r=\alpha n+r'$ and $s=\beta n+s'$ for some integers $\alpha, \beta>0$ and $1\leq r',s'\leq n$. Suppose $r'\neq s'$, $(n,q)=1$ and $|P(r',n,k,s',q)^{ab}|$ is finite. Then  \[|P(r,n,k,s,q)^{ab}|=
	\Big |\dfrac{(r-s)}{(r'-s')}\Big | \times |P(r',n,k,s',q)^{ab} |.\]
\end{cor}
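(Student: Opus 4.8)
The plan is to use the determinant formula \eqref{order}, which expresses the order of $P(r,n,k,s,q)^{ab}$ as $\bigl|\prod_{\lambda^n=1}f(\lambda)\bigr|$, where $f(t)=\sum_{i=0}^{r-1}t^{iq}-t^{k-1}\sum_{i=0}^{s-1}t^{iq}$ is the representer polynomial from \eqref{eqn:representerpoly}. Write $f_{r,s}(t)$ and $f_{r',s'}(t)$ for the representer polynomials of $P(r,n,k,s,q)$ and $P(r',n,k,s',q)$ respectively. I would split the product over the $n$th roots of unity into the contribution from $\lambda=1$ and the contribution from the primitive-or-otherwise roots $\lambda\neq 1$, and compare the two polynomials factor by factor.

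First I would handle the roots $\lambda \neq 1$. For such $\lambda$, Lemma \ref{lem:isomorphicab} (applied with the exponent $s$ and its reduction $s'$, and separately with $r$ and $r'$, using $r-r'=\alpha n$ and $s-s'=\beta n$ and $(n,q)=1$) gives $\sum_{i=0}^{r-1}\lambda^{iq}=\sum_{i=0}^{r'-1}\lambda^{iq}$ and $\sum_{i=0}^{s-1}\lambda^{iq}=\sum_{i=0}^{s'-1}\lambda^{iq}$. Substituting these into the expression for $f$ shows $f_{r,s}(\lambda)=f_{r',s'}(\lambda)$ for every $\lambda\neq 1$ with $\lambda^n=1$. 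Hence the partial products over $\lambda\neq 1$ agree exactly, and the only discrepancy between the two full products comes from the single factor at $\lambda=1$.

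Next I would evaluate the $\lambda=1$ factor directly. Setting $t=1$ in \eqref{eqn:representerpoly} collapses each geometric-type sum to its number of terms, giving $f_{r,s}(1)=r-s$ and $f_{r',s'}(1)=r'-s'$. Since $r'\neq s'$ by hypothesis, $f_{r',s'}(1)=r'-s'\neq 0$, so this factor is nonzero and we may form the ratio. Combining with the previous paragraph,
\begin{equation*}
\frac{\prod_{\lambda^n=1}f_{r,s}(\lambda)}{\prod_{\lambda^n=1}f_{r',s'}(\lambda)}=\frac{f_{r,s}(1)}{f_{r',s'}(1)}=\frac{r-s}{r'-s'}.
\end{equation*}
Taking absolute values and using \eqref{order} for both groups yields the claimed identity $|P(r,n,k,s,q)^{ab}|=\bigl|\tfrac{r-s}{r'-s'}\bigr|\times|P(r',n,k,s',q)^{ab}|$. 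The hypothesis that $|P(r',n,k,s',q)^{ab}|$ is finite guarantees $\prod_{\lambda^n=1}f_{r',s'}(\lambda)\neq 0$, so that the left-hand product is also nonzero and the ratio of orders is legitimate.

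The main subtlety to be careful about is the matching of hypotheses in the application of Lemma \ref{lem:isomorphicab}: the lemma is stated for a positive difference $s-s'=\alpha n$ with $\alpha>0$ and requires $(n,q)=1$, both of which hold here, but I must apply it once for the $s$-sum and once for the $r$-sum, and I should confirm that the ranges $1\leq r',s'\leq n$ together with $\alpha,\beta>0$ are consistent with the lemma's setup (in particular that $r'$ and $s'$ really are the reductions produced by subtracting a positive multiple of $n$). No genuine obstacle arises beyond this bookkeeping, since the computation at $\lambda\neq 1$ is forced by the lemma and the computation at $\lambda=1$ is an elementary evaluation; the finiteness hypothesis is exactly what is needed to divide.
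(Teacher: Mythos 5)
Your proposal is correct and follows essentially the same route as the paper: both use the determinant formula \eqref{order}, apply Lemma \ref{lem:isomorphicab} to match the factors at each $\lambda\neq 1$, and isolate the $\lambda=1$ factor to produce the ratio $(r-s)/(r'-s')$. The only (welcome) difference is that you make explicit that the lemma must be invoked twice, once for the $r$-sum and once for the $s$-sum, which the paper leaves implicit.
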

\begin{proof}
Let $f(t)$ and $f'(t)$ be the representer polynomials of $P(r,n,k,s,q)^{ab}$ and $P(r',n,k,s',q)^{ab}$ respectively. If $\lambda^n=1$ and $\lambda\neq 1$, Then $f(\lambda)=f'(\lambda)$ by Lemma \ref{lem:isomorphicab}, so	we have
	\begin{align*}
	|P(r,n,k,s,q)^{ab}|&=\Big\vert\prod_{\lambda^n=1}f(\lambda)\Big\vert\\
	&=|f(1)|\times \Big\vert\prod_{\lambda^n=1,\lambda\neq 1}f(\lambda)\Big\vert\\
	&=|f(1)|\times \Big\vert\prod_{\lambda^n=1,\lambda\neq 1}f'(\lambda)\Big\vert\\
	&=|f(1)/f'(1)|\times |P(r',n,k,s',q)^{ab}|.
	\end{align*}
	It is easy to show that $f(1)=r-s$ and $f'(1)=r'-s'$, as required. 
\end{proof}

\begin{lem}\label{lem:isoofreppoly}
Let $n,k\geq 2$ be integers. Then $P(r,n,k,r-1,1)^{ab}\cong P(k-1,n,r+1,k-2,1)^{ab}$.
\end{lem}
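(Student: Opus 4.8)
The plan is to reduce the lemma to a single polynomial identity, exploiting the fact recorded in the excerpt that the abelianization $G_n(w)^{ab}$ is the additive group of $\mathbb{Z}[t]/\langle f(t), t^n-1\rangle$, where $f$ is the representer polynomial; in particular two cyclically presented groups whose representer polynomials agree have isomorphic abelianizations. So it suffices to compute the two representer polynomials and show they coincide.

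Using (\ref{eqn:representerpoly}) with $q=1$, the representer polynomial of $P(r,n,k,r-1,1)$ is
\[
f(t)=\sum_{i=0}^{r-1}t^i-t^{k-1}\sum_{i=0}^{r-2}t^i,
\]
while substituting the parameters $(k-1,n,r+1,k-2,1)$ into (\ref{eqn:representerpoly}) gives, for $P(k-1,n,r+1,k-2,1)$,
\[
g(t)=\sum_{i=0}^{k-2}t^i-t^{r}\sum_{i=0}^{k-3}t^i.
\]
I would then show $f(t)=g(t)$ in $\mathbb{Z}[t]$. Rather than divide by $t-1$, which forces awkward bookkeeping in the edge cases (such as $r=1$ or $k=2$, where one of the sums is empty), I would multiply through by $t-1$ and use the identity $(t-1)\sum_{i=0}^{m-1}t^i=t^m-1$, valid for every integer $m\geq 0$. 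This yields
\[
(t-1)f(t)=(t^r-1)-t^{k-1}(t^{r-1}-1)=t^r+t^{k-1}-t^{r+k-2}-1,
\]
and likewise
\[
(t-1)g(t)=(t^{k-1}-1)-t^{r}(t^{k-2}-1)=t^{k-1}+t^r-t^{r+k-2}-1.
\]
The two right-hand sides are literally the same polynomial, so $(t-1)f(t)=(t-1)g(t)$; since $\mathbb{Z}[t]$ is an integral domain and $t-1\neq 0$, it follows that $f(t)=g(t)$.

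Finally, from $f=g$ in $\mathbb{Z}[t]$ I get $\langle f(t),t^n-1\rangle=\langle g(t),t^n-1\rangle$, so the quotient rings $\mathbb{Z}[t]/\langle f,t^n-1\rangle$ and $\mathbb{Z}[t]/\langle g,t^n-1\rangle$ coincide. Their additive groups are precisely the two abelianizations in question, giving $P(r,n,k,r-1,1)^{ab}\cong P(k-1,n,r+1,k-2,1)^{ab}$. Note this formulation via the ideal sidesteps the fact that $f$ and $g$ as written above need not be in the reduced form $\sum_{i=0}^{n-1}c_it^i$, since equality of the ideals is all that the ring-quotient description requires.

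There is no serious obstacle here: the whole content is the polynomial identity $f=g$, and the only thing to watch is the empty-sum behaviour for small parameter values, which the trick of multiplying by $t-1$ handles uniformly. One could alternatively verify $f=g$ by a direct term-by-term comparison, splitting into the cases $r\geq k-1$ and $r<k-1$ and checking that the surviving index ranges match, but the integral-domain argument is shorter and cleaner.
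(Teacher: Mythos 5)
Your proof is correct and follows essentially the same route as the paper: both reduce the lemma to the equality of the two representer polynomials, which determines the abelianization via $\mathbb{Z}[t]/\langle f(t),t^n-1\rangle$. The only difference is in verifying $f=g$: the paper rearranges the sums term by term (treating the cases $r\leq k$ and $k<r$ by symmetry, and disposing of $k=2$ and $r=1$ separately via the trivial-group lemma), whereas your multiplication by $t-1$ in the integral domain $\mathbb{Z}[t]$ handles all cases, including the empty-sum ones, uniformly and a little more cleanly.
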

 \begin{proof}
If $k=2$ or $r=1$ then $P(r,n,k,r-1,1)\cong P(k-1,n,r+1,k-2,1)$ is the trivial group by Lemma \ref{lem:Trivialgroup}. Assume then that $n,k> 2$ and $r>1$. As explained in the introduction  it suffices to show that the representer polynomials of the two groups $P(r,n,k,r-1,1)$ and $P(k-1,n,r+1,k-2,1)$ are the same.
 Suppose $r\leq k$. Then the representer polynomial of the group 
 $P(k-1,n,r+1,k-2,1)$ is \[f(t)=\sum_{i=0}^{k-2}t^i- t^r\sum_{i=0}^{k-3}t^i.\]
 Therefore
 \begin{align*}
 f(t)&=\sum_{i=0}^{k-2}t^i- \sum_{i=0}^{k-3}t^{i+r}\\
 &=\Bigg (\sum_{i=0}^{r-1}t^i+ \sum_{i=r}^{k-2}t^{i}\Bigg ) - \Bigg (\sum_{i=r}^{k-2}t^i+ \sum_{i=k-1}^{k+r-3}t^{i} \Bigg )\\
  &=\sum_{i=0}^{r-1}t^i - \sum_{i=k-1}^{k+r-3}t^{i}\\
  &=\sum_{i=0}^{r-1}t^i - t^{k-1}\sum_{i=0}^{r-2}t^{i}
 \end{align*}
 which is the representer polynomial of the group $P(r,n,k,r-1,1)$. By replacing $k$ with $r+1$ we obtain a proof for the case when $k<r$.
 \end{proof}

We close this section with a result that allows one to assume that $r\not\equiv 0\bmod n$ when proving that $P(r,n,k,s,q)$ is perfect.
\begin{lem}\label{lem:requivn}
Let $k,r,q\geq 1$  and $n\geq 2$ be integers. If $r\equiv 0\bmod n$ and $(n,q)=1$, then the group $P(r,n,k,r-1,q)$ is perfect.
\end{lem}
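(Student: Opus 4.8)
The plan is to use the representer polynomial together with the determinant/unit criterion recalled in the Method of proof subsection. Since here $s=r-1$, equation (\ref{eqn:representerpoly}) gives the representer polynomial $f(t)=\sum_{i=0}^{r-1}t^{iq}-t^{k-1}\sum_{i=0}^{r-2}t^{iq}$, and by (\ref{order}) it suffices to show that $|f(\lambda)|=1$ for every $n$-th root of unity $\lambda$, or equivalently that $f(t)$ is a unit in $\mathbb{Z}[t]/\langle t^n-1\rangle$.

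First I would evaluate $f$ at $\lambda=1$, where $f(1)=r-(r-1)=1$. Next, for $\lambda\neq 1$ with $\lambda^n=1$, the hypothesis $(n,q)=1$ guarantees $\lambda^q\neq 1$, so both geometric series can be summed in closed form. The hypothesis $r\equiv 0\bmod n$ forces $\lambda^{rq}=1$ (because $n\mid r$), which makes the first sum $\sum_{i=0}^{r-1}\lambda^{iq}=(\lambda^{rq}-1)/(\lambda^q-1)$ vanish, and collapses the second to $\sum_{i=0}^{r-2}\lambda^{iq}=(\lambda^{(r-1)q}-1)/(\lambda^q-1)=-\lambda^{-q}$ after using $\lambda^{(r-1)q}=\lambda^{rq}\lambda^{-q}=\lambda^{-q}$. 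Substituting yields the clean identity $f(\lambda)=\lambda^{k-1}\lambda^{-q}=\lambda^{k-1-q}$, a root of unity of modulus $1$.

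With this computation in hand, two routes close the argument. Via (\ref{order}), $|P(r,n,k,r-1,q)^{ab}|=|\prod_{\lambda^n=1}f(\lambda)|=|f(1)|\prod_{\lambda\neq 1}|f(\lambda)|=1$, so the abelianization is trivial and the group is perfect. Alternatively, and more cleanly, the computation shows that $f(t)$ and $t^{k-1-q}$ take the same value at all $n$ distinct roots of unity; since each has a representative of degree $<n$ modulo $t^n-1$, their difference is a polynomial of degree $<n$ with $n$ distinct roots, hence is zero, so $f(t)=t^{k-1-q}$ in $\mathbb{Z}[t]/\langle t^n-1\rangle$. As $t$ is a unit in this ring, so is $f(t)$, and the unit criterion gives perfectness at once.

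I expect no serious obstacle: the whole argument reduces to a short evaluation of a geometric series at roots of unity. The only points requiring care are the two uses of the hypotheses — $(n,q)=1$ to ensure $\lambda^q\neq 1$ so the denominators are nonzero, and $r\equiv 0\bmod n$ to force $\lambda^{rq}=1$ so the leading sum collapses. The mildly delicate step is the passage from ``equal at all roots of unity'' to ``equal in $\mathbb{Z}[t]/\langle t^n-1\rangle$,'' which needs the remark that two integer polynomials of degree $<n$ agreeing at the $n$ distinct roots of unity must coincide.
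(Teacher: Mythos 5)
Your proof is correct, and it takes a genuinely different route from the paper's. You argue directly with the representer polynomial: for a nontrivial $n$th root of unity $\lambda$, the hypotheses $(n,q)=1$ and $n\mid r$ make the first geometric series vanish and collapse the second to $-\lambda^{-q}$, giving $f(\lambda)=\lambda^{k-1-q}$ of modulus one, so either the determinant formula or the unit criterion in $\mathbb{Z}[t]/\langle t^n-1\rangle$ finishes the job. The paper instead proceeds structurally: it first uses $(n,q)=1$ to replace $q$ by $1$ (via the isomorphism $P(r,n,k,r-1,q)\cong P(r,n,K,r-1,1)$ with $K-1\equiv\hat q(k-1)\bmod n$), then applies Lemma \ref{lem:isoofreppoly} to pass to $P(K-1,n,r+1,K-2,1)^{ab}$, and observes that $r+1\equiv 1\bmod n$ makes that group trivial by Lemma \ref{lem:Trivialgroup}. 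Your computation is more self-contained and makes the mechanism transparent (it even identifies $f(t)$ as the unit $t^{(k-1-q)\bmod n}$ explicitly), whereas the paper's version reuses its standing toolkit of reductions and so is shorter given the surrounding lemmas. All your steps check out, including the mildly delicate one: two integer polynomials of degree less than $n$ agreeing at the $n$ distinct $n$th roots of unity do coincide, since their difference has too many roots.
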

\begin{proof}
Let $r\equiv 0\bmod n$ and $(n,q)=1$. Then $P(r,n,k,r-1,q)\cong P(r,n,K,r-1,1)$, $1\leq K\leq n$, where $K-1\equiv \hat{q}(k-1)\bmod n$, $q\hat{q}\equiv 1\bmod n$. If $K=1$ or $n=2$, then $P(r,n,K,r-1,1)$ is the trivial group by Lemma \ref{lem:Trivialgroup}. Assume $n,K\geq 2$. Then, by Lemma \ref{lem:isoofreppoly} we have that $P(r,n,K,r-1,1)^{ab}\cong P(K-1,n,r+1,K-2,1)^{ab}$. The result follows since the group $P(K-1,n,r+1,K-2,1)$ is isomorphic to the trivial group $P(K-1,n,1,K-2,1)$.
\end{proof}
 \section{The groups  $H(r,n,s)$}\label{sec:sec4}

In this section is to give a proof of Theorem \ref{theorem:MainResult1} and deduce Corollary  \ref{theorem:LOGgroup}. To do so, we first do some calculations involving the representer polynomial $f(t)$ of $P(r,n,k,s,q)$ evaluated at an $n$th root of unity. Recall from (\ref{eqn:representerpoly}) that $f(t)=F(t)[1-t^{k-1}]+\sum_{i=s}^{r-1}t^{iq}$ where 
\begin{equation}\label{eqn:FOfT}
F(t)=\sum_{i=0}^{s-1}t^{iq},
\end{equation} 
and where it is assumed that $r\geq s$. Let $\bar{z}$ denote the complex conjugate of $z\in \mathbb{C}$ and  let 
\begin{align}\label{six}
\mathcal{F}(t)&=\left\{
  \begin{array}{@{}ll@{}}
    f(t)-1, & \text{if}\ t\in \mathbb{R}\\ 
    f(t)f(\bar{t})-1, & \text{if}  \ t\in \mathbb{C}-\mathbb{R}\\
\end{array}\right.
\end{align}
If $r=s+1$ and $\lambda\in \mathbb{C}-\mathbb{R}$ is an $n$th root of unity, then it can be shown (see proof of Theorem \ref{lem:MlambdaNlambda}) that 
\begin{equation}\label{M}
\mathcal{F}(\lambda) = 2F(\lambda)F(\bar{\lambda})[1-\Re({\lambda}^{k-1})]+2\Re(\bar{\lambda}^{q} F(\bar{\lambda})[1-{\lambda}^{k-1}])
\end{equation}
where $\Re(z)$ means the real part of $z\in \mathbb{C}$.

\medskip
We are now in the position to give our next result, which is the main ingredient used in the proof of Theorem \ref{theorem:MainResult1}.
\begin{theorem}\label{lem:MlambdaNlambda}
	Let $n\geq 2$, $k,q\geq 1$ and $r\geq s\geq 1$ be integers and let $\mathcal{F}(t)$ be as in (\ref{six}). If $P(r,n,k,s,q)$ is perfect,  then  $r=s+1$ and either $|\mathcal{F}(\lambda)+1|< 1$ for some $n$th root of unity $\lambda$, or $|\mathcal{F}(\lambda)+1|= 1$ for all $n$th roots of unity $\lambda$.
\end{theorem}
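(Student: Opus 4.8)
The plan is to reduce the statement to an elementary fact about a product of nonnegative reals that equals $1$. First I would invoke Lemma~\ref{lem:r-s}: since $P(r,n,k,s,q)$ is perfect, $\epsilon=-1$ and $|r-s|=1$, and because $r\geq s$ this forces $r=s+1$. This settles the first assertion and lets me use the reduced form $f(t)=F(t)[1-t^{k-1}]+t^{sq}$ of the representer polynomial (\ref{eqn:representerpoly}), with $F$ as in (\ref{eqn:FOfT}).

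Next I would record the two facts that make the quantity $\mathcal{F}(\lambda)+1$ natural. Because $f$ has integer coefficients, $f(\bar\lambda)=\overline{f(\lambda)}$, so for $\lambda\in\mathbb{C}-\mathbb{R}$ one has $\mathcal{F}(\lambda)+1=f(\lambda)f(\bar\lambda)=|f(\lambda)|^2\geq 0$, while for real $\lambda$ one has $\mathcal{F}(\lambda)+1=f(\lambda)$; in either case $|\mathcal{F}(\lambda)+1|$ equals $|f(\lambda)|$ (resp.\ $|f(\lambda)|^2$), and this value agrees on a conjugate pair $\{\lambda,\bar\lambda\}$. To obtain the explicit real expression (\ref{M}) I would expand $f(\lambda)f(\bar\lambda)$ from the reduced form: the term $\lambda^{sq}\bar\lambda^{sq}=1$ cancels the $-1$ in the definition (\ref{six}), the quadratic term gives $F(\lambda)F(\bar\lambda)(1-\lambda^{k-1})(1-\bar\lambda^{k-1})=2F(\lambda)F(\bar\lambda)[1-\Re(\lambda^{k-1})]$, and the two remaining cross terms are complex conjugates of one another, hence sum to twice a real part. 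The one computation needed here is the identity $\lambda^{sq}F(\bar\lambda)=\lambda^q F(\lambda)$, valid because $|\lambda|=1$ reverses the geometric sum defining $F$; this converts the cross-term real part into the form displayed in (\ref{M}).

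With these in hand, the core step is to rewrite the order formula (\ref{order}). Grouping the $n$th roots of unity into the real roots together with the complex-conjugate pairs, and choosing a set $S$ consisting of the real roots and one representative from each pair, I obtain
\[
|P(r,n,k,s,q)^{ab}|=\Big|\prod_{\lambda^n=1}f(\lambda)\Big|=\prod_{\lambda^n=1}|f(\lambda)|=\prod_{\lambda\in S}|\mathcal{F}(\lambda)+1|,
\]
since a conjugate pair contributes $|f(\lambda)|\,|f(\bar\lambda)|=|f(\lambda)|^2=|\mathcal{F}(\lambda)+1|$ while a real root contributes $|f(\lambda)|=|\mathcal{F}(\lambda)+1|$. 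Perfectness means the abelianization is trivial, so this product equals $1$; in particular it is nonzero, whence every factor $|\mathcal{F}(\lambda)+1|$ is a strictly positive real. Now the conclusion is immediate: if no factor were $<1$ then all factors are $\geq 1$, and if moreover they were not all equal to $1$ then some factor exceeds $1$, forcing the product to exceed $1$, a contradiction. Translating back from representatives in $S$ to all $n$th roots of unity is harmless because $|\mathcal{F}(\lambda)+1|=|\mathcal{F}(\bar\lambda)+1|$, and this yields exactly the stated dichotomy.

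The steps are all short, so there is no single hard obstacle; the points requiring care are the bookkeeping in the factorization over roots---correctly pairing conjugate roots and checking that passing between ``all roots'' and ``all representatives'' preserves both alternatives---and the algebraic identity $\lambda^{sq}F(\bar\lambda)=\lambda^q F(\lambda)$ underlying (\ref{M}). Once the determinant has been expressed through $\mathcal{F}$, the final logical step is trivial.
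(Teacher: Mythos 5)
Your proposal is correct and follows essentially the same route as the paper: reduce to $r=s+1$ via Lemma~\ref{lem:r-s}, expand $f(\lambda)f(\bar\lambda)$ using the identity $\lambda^{sq}F(\bar\lambda)=\lambda^{q}F(\lambda)$ to identify $\mathcal{F}(\lambda)+1$, factor the order of the abelianization over conjugate-pair representatives, and read off the dichotomy from the product equalling $1$. Your explicit observation that $\mathcal{F}(\lambda)+1=|f(\lambda)|^{2}\geq 0$ for nonreal $\lambda$ is a small tidiness improvement over the paper's bookkeeping, but the argument is the same.
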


\begin{proof}
Let $f(t)$ be the representer polynomial of $P(r,n,k,s,q)$. Since $f(t)$ has real coefficients, its nonreal roots come in complex conjugate pairs. Let $\mathcal{S}(n)$ denote a set whose elements are $z$, for each distinct complex conjugate pair $\{z,\bar{z}\}$ of $n$th roots of unity. (For example, $\mathcal{S}(2)=\{-1,1\}$ and  $\mathcal{S}(4)=\{-1,1,\pm i\}$). Let $P(r,n,k,s,q)$ be a perfect group, so $P(r,n,k,s,q)^{ab}$ is a finite. 
For each $\lambda\in \mathcal{S}(n)$, set $\mathcal{L}({\lambda})=f(\lambda)$ if $\lambda$ is real and $\mathcal{L}({\lambda})=f(\lambda)f(\bar{\lambda})$ otherwise.  Then by (\ref{order}) we have
 \begin{equation}\label{eqn:AbelianOrder}
 |P(r,n,k,s,q)^{ab}|=\prod_{{\lambda}\in \mathcal{S}(n)}\mathcal{L}({\lambda}).
 \end{equation}
Note that $r=s+1$  by Lemma \ref{lem:r-s} since $P(r,n,k,s,q)$ is perfect. Hence,  $f(t)=F(t)(1-t^{k-1})+t^{sq}$ where $F(t)$ is as in (\ref{eqn:FOfT}). If $\lambda$ is nonreal, then 
	\begin{align*}
	\mathcal{L}({\lambda})&=f(\lambda)f(\bar{\lambda})=(F(\lambda)-\lambda^{k-1} F(\lambda)+\lambda^{sq})(F(\bar{\lambda})-\bar{\lambda}^{k-1} F(\bar{\lambda})+\bar{\lambda}^{sq})\\
	&=F(\lambda)F(\bar{\lambda})
	-\bar{\lambda}^{k-1}F(\lambda)F(\bar{\lambda})
	+\bar{\lambda}^{sq}F(\lambda)
	-\lambda^{k-1} F(\lambda)F(\bar{\lambda})
	+F(\lambda)F(\bar{\lambda})\\
	&\quad \quad -\lambda^{k-1}\bar{\lambda}^{sq} F(\lambda)
	+\lambda^{sq}F(\bar{\lambda})
	-\bar{\lambda}^{k-1}{\lambda}^{sq} F(\bar{\lambda})
	+1.
	\end{align*}
	Note that 
	\begin{align*}
	\lambda^{sq}F(\bar{\lambda})&=\lambda^q\sum_{i=0}^{s-1} \lambda^{(s-1)q}\bar{\lambda}^{iq}\\
	&=\lambda^q\sum_{i=0}^{s-1} \lambda^{(s-1-i)q}\\
	&=\lambda^q F(\lambda).
	\end{align*} 
	Similarly we can show that $\bar{\lambda}^{sq}F(\lambda)=\bar{\lambda}^q F(\bar{\lambda})$. Hence
	\begin{align*}
	\mathcal{L}({\lambda})
	&=F(\lambda)F(\bar{\lambda})
	-\bar{\lambda}^{k-1}F(\lambda)F(\bar{\lambda})
	+\bar{\lambda}^{sq}F(\lambda)
	-\lambda^{k-1} F(\lambda)F(\bar{\lambda})
	+F(\lambda)F(\bar{\lambda})\\
	&\quad \quad -\lambda^{k-1}\bar{\lambda}^{sq} F(\lambda)
	+\lambda^{sq}F(\bar{\lambda})
	-\bar{\lambda}^{k-1}{\lambda}^{sq} F(\bar{\lambda})
	+1\\
	&=2F(\lambda)F(\bar{\lambda})
	-\bar{\lambda}^{k-1}F(\lambda)F(\bar{\lambda})
	+\bar{\lambda}^{q}F(\bar{\lambda})
	-\lambda^{k-1} F(\lambda)F(\bar{\lambda})
	-\lambda^{k-1}\bar{\lambda}^{q} F(\bar{\lambda})\\
	& \quad \quad  +\lambda^{q}F({\lambda})
	-\bar{\lambda}^{k-1}{\lambda}^{q} F({\lambda})
	+1\\
	&=F(\lambda)F(\bar{\lambda})[2-{\lambda}^{k-1}-\bar{\lambda}^{k-1}]
	+\bar{\lambda}^{q} F(\bar{\lambda})[1-{\lambda}^{k-1}]
	+{\lambda}^{q} F({\lambda})[1-\bar{\lambda}^{k-1}]
	+1\\
	&=2F(\lambda)F(\bar{\lambda})[1-\Re(\lambda^{k-1})]
	+2\Re(\bar{\lambda}^{q} F(\bar{\lambda})[1-{\lambda}^{k-1}])
	+1\\
	&=\mathcal{F}(\lambda)+1.
	\end{align*}
It then follows from (\ref{six}) that $\mathcal{L}(\lambda)=|\mathcal{F}(\lambda)+1|$ for each $n$th root of unity $\lambda$. Hence by (\ref{eqn:AbelianOrder}) we have \[|P(r,n,k,s,q)^{ab}|=\prod_{{\lambda}\in \mathcal{S}(n)}|\mathcal{F}({\lambda})+1|.\]
Therefore, $P(r,n,k,s,q)^{ab}= 1$ implies that either $|\mathcal{F}(\lambda)+1|<1$ for some $n$th root of unity $\lambda$ or $|\mathcal{F}(\lambda)+1|=1$ for all $n$th roots of unity $\lambda$, as required.
\end{proof}

By restricting to $H(r,n,r-1)=P(r,n,r+1,r-1,1)$, equations (\ref{eqn:FOfT}) and (\ref{M}) become $F(t)=1+t+t^2+\cdots+t^{r-2}$ and $\mathcal{F}(\lambda) = 2F(\lambda)F(\bar{\lambda})[1-\Re({\lambda}^{r})]+2\Re(\bar{\lambda} F(\bar{\lambda})[1-{\lambda}^{r}])$ respectively. Our next result shows that  $2\Re(\bar{\lambda} F(\bar{\lambda})[1-{\lambda}^{r}])=0$, so  $\mathcal{F}(\lambda) = 2F(\lambda)F(\bar{\lambda})[1-\Re({\lambda}^{r})]$.
\begin{lem}\label{lem:Mlambdaequals0}
Let  $F(t)=1+t+t^2+\cdots+t^{r-2}$ and let  $z\in \mathbb{C}$  such that $|z|=1$. Then $\Re({z} F({z})[1-\bar{z}^{r}])=0$.
\end{lem}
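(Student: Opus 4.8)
The plan is to reduce the claim to the elementary observation that for any complex number $w$, the quantity $w-\bar w$ is purely imaginary, hence has zero real part. Concretely, I would first absorb the leading $z$ into $F$ by writing $S:=zF(z)=\sum_{j=1}^{r-1}z^{j}$, so that the expression to be analysed becomes
\[
z\,F(z)\,[1-\bar z^{\,r}] \;=\; S - S\,\bar z^{\,r}.
\]
The whole proof then rests on a single identity, namely that the factor $\bar z^{\,r}$ converts $S$ into its own complex conjugate: $S\,\bar z^{\,r}=\bar S$. Granting this, the displayed expression equals $S-\bar S$, which is purely imaginary, and therefore $\Re\big(z F(z)[1-\bar z^{\,r}]\big)=0$, as required.

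To establish $S\,\bar z^{\,r}=\bar S$ I would use the hypothesis $|z|=1$, which gives $\bar z = z^{-1}$ and hence $\bar z^{\,r}=z^{-r}$. Multiplying termwise,
\[
S\,\bar z^{\,r}=\sum_{j=1}^{r-1}z^{\,j}z^{-r}=\sum_{j=1}^{r-1}z^{\,j-r},
\]
and reindexing by $m=r-j$ (so that $m$ runs over $1,\dots,r-1$) turns this into $\sum_{m=1}^{r-1}z^{-m}=\sum_{m=1}^{r-1}\bar z^{\,m}=\overline{\,\sum_{m=1}^{r-1}z^{\,m}\,}=\bar S$, again using $|z|=1$ in the form $z^{-m}=\bar z^{\,m}$. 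This completes the argument. I would also note the degenerate small cases ($r=1$ gives $F\equiv 0$ and the claim is trivial; $r=2$ gives $S=z$ and $S-\bar S=z-\bar z$), which are subsumed by the general computation.

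There is no substantial obstacle here: the statement is a clean computational lemma, and the only point requiring care is the bookkeeping in the reindexing together with the consistent use of $|z|=1$ to identify negative powers of $z$ with conjugates of positive powers. The conceptual content is entirely captured by the remark that conjugating the root of unity $z$ while multiplying by $\bar z^{\,r}$ reverses and conjugates the geometric sum $S$, and that any number minus its conjugate is imaginary.
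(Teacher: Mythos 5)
Your proof is correct and rests on the same key identity as the paper's: the observation that multiplying the geometric sum by the appropriate power of $\bar z$ conjugates it (your $S\bar z^{\,r}=\bar S$ is exactly the paper's $\bar z^{\,r-2}F(z)=F(\bar z)$ after absorbing the leading $z$). The paper expands $2\Re(w)=w+\bar w$ and cancels terms, whereas you exhibit the expression directly as $S-\bar S$; this is only a cosmetic repackaging of the same argument.
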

\begin{proof}
Notice  that $2\Re(z F(z)[1-\bar{z}^r])={z} F({z})[1-\bar{z}^r] +\bar{z} F(\bar{z})[1-{z}^r]$, and
	\begin{align*}
	z F({z})[1-\bar{z}^r] +\bar{z} F(\bar{z})[1-{z}^r]
	&=z F({z})-\bar{z}\bar{z}^{r-2} F({z}) +\bar{z} F(\bar{z})-z{z}^{r-2} F(\bar{z})\\
	&=z F(z)-\bar{z} F(\bar{z}) +\bar{z} F(\bar{z})-z F({z})\\
	&=0.
	\end{align*}
 Therefore $\Re({z} F({z})[1-\bar{z}^{r}])=0$, as required.
\end{proof}

We can now give a proof of Theorem  \ref{theorem:MainResult1}.

\begin{proof}[Proof of Theorem \ref{theorem:MainResult1}]\label{hrns}
$(1)\implies (3)$. Let $H(r,n,s)$ be a perfect group. Since $H(r,n,s)\cong H(s,n,r)$ we assume that $r\geq s$. Hence, $r=s+1$ by Lemma \ref{lem:r-s}. If $n=2$, then there is nothing left to show since either $s$ is even or $r$ is even. Assume $n>2$ and let $\lambda$ be a nonreal  $n$th root of unity. Let $F(t)=1+t+t^2+\cdots +t^{s-1}$ and let  $\mathcal{F}(\lambda)$ be as in (\ref{M}) but with $k=r+1$ and $q=1$. Then by Lemma \ref{lem:Mlambdaequals0} we have \[\mathcal{F}(\lambda)=2F(\lambda)F(\bar{\lambda})[1-\Re(\lambda^{r})].\] By Theorem \ref{lem:MlambdaNlambda} we must have that  $|\mathcal{F}(\lambda)+1|\leq 1$. But $\mathcal{F}(\lambda)$ is a nonnegative real number. Hence $\mathcal{F}(\lambda)=0$, so either $\lambda^{r}=1$ or $F(\lambda)=0$. But $F(\lambda)=(\lambda^s-1)/(\lambda-1)$. So $F(\lambda)=0$ implies $\lambda^s=1$. Therefore, choosing  $\lambda$ to be a primitive $n$th root of unity shows that either $r\equiv 0\bmod n$ or $s\equiv 0\bmod n$, as required.

\medskip
$(3)\implies (2)$. Let $|r-s|=1$ and either $r\equiv 0\bmod n$ or $s\equiv 0\bmod n$. Then $H(r,n,s)$ is the trivial group by Lemma \ref{lem:Trivialgroup}.

\medskip
$(2)\implies (1)$. A trivial group is perfect.
\end{proof}
Conjecture  \ref{conj:WilliamsConjecture} follows from Theorem \ref{theorem:MainResult1}.

\begin{cor}\label{Cor3.3}
Let $k,r,s,q\geq 1$  and $n\geq 2$. If  $k\equiv qr+1$, then $P(r,n,k,s,q)$  is perfect if and only if $|r-s|=1$ and either $k\equiv 1\bmod n$ or $k\equiv 1+q(r-s)\bmod n$.
\end{cor}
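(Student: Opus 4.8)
The plan is to show that the hypothesis $k\equiv qr+1 \bmod n$ forces $P(r,n,k,s,q)$ to fall, after the standard reductions, into the Campbell--Robertson family $H(r,N,s)$, so that the corollary drops out of Theorem \ref{theorem:MainResult1}. The first step is a gcd observation: writing $k-1=qr+mn$, every common divisor of $n$ and $q$ divides $qr+mn=k-1$, so $(n,q)\mid k-1$ and hence $(n,q,k-1)=(n,q)=:d$. Thus the hypothesis already pins down the relevant gcd, which is what makes the reduction to $q=1$ land precisely on the $H$-family.

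Next I would run the two reductions recalled in the introduction. By the free-product decomposition, $P(r,n,k,s,q)$ is perfect if and only if $P(r,N,K,s,Q)$ is perfect, where $N=n/d$, $Q=q/d$ and $K-1\equiv (k-1)/d \bmod N$; by construction $(N,Q)=1$. The hypothesis survives: from $k-1=qr+mn$ we get $(k-1)/d=Qr+mN$, i.e. $K\equiv Qr+1 \bmod N$. Since $(N,Q)=1$, \cite[Theorem 16]{williams2012largeness} gives $P(r,N,K,s,Q)\cong P(r,N,\hat{Q}(K-1)+1,s,1)$ with $Q\hat{Q}\equiv 1 \bmod N$, and $\hat{Q}(K-1)+1\equiv \hat{Q}Qr+1\equiv r+1 \bmod N$, so $P(r,N,K,s,Q)\cong P(r,N,r+1,s,1)=H(r,N,s)$. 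Hence $P(r,n,k,s,q)$ is perfect if and only if $H(r,N,s)$ is perfect, and by Theorem \ref{theorem:MainResult1} this happens exactly when $|r-s|=1$ and either $r\equiv 0 \bmod N$ or $s\equiv 0 \bmod N$.

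The third step is to translate those two divisibility conditions back into $k,n,q$. Since $k-1\equiv qr \bmod n$, the condition $k\equiv 1 \bmod n$ is equivalent to $qr\equiv 0 \bmod n$, and $k\equiv 1+q(r-s)\bmod n$ is equivalent to $qs\equiv 0 \bmod n$. Writing $n=dN$ and $q=dQ$ with $(N,Q)=1$, one has $n\mid qr \iff N\mid Qr \iff N\mid r$, and likewise $n\mid qs \iff N\mid s$. Therefore $r\equiv 0 \bmod N \iff k\equiv 1 \bmod n$ and $s\equiv 0 \bmod N \iff k\equiv 1+q(r-s)\bmod n$, which converts the conclusion of Theorem \ref{theorem:MainResult1} into exactly the asserted equivalence.

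The genuinely delicate part is not any estimate but the gcd/congruence bookkeeping that has to stay consistent across the two reductions: one must verify that $(n,q,k-1)$ really collapses to $(n,q)$, that the relation $k\equiv qr+1$ is inherited by the reduced parameters $(N,K,Q)$, and that the final translation uses $(N,Q)=1$ correctly, since it is precisely coprimality that lets one cancel $Q$ and pass between ``$N\mid Qr$'' and ``$N\mid r$''. I expect no input beyond Theorem \ref{theorem:MainResult1} and elementary number theory; the whole argument is a reduction rather than a fresh computation.
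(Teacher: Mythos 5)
Your proof is correct and follows essentially the same route as the paper: reduce via the free-product decomposition and the $q\mapsto 1$ isomorphism to $H(r,N,s)$ with $N=n/(n,q)$, apply Theorem \ref{theorem:MainResult1}, and translate the divisibility conditions back to congruences on $k$. You spell out more explicitly why the reduction lands on the $H$-family and run both directions through a single chain of equivalences (where the paper invokes Lemma \ref{lem:Trivialgroup} for the easy direction), but the argument is the same.
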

\begin{proof}
If the conditions in the statement of the corollary hold, then $P(r,n,k,s,q)$ is trivial by Lemma \ref{lem:Trivialgroup}, and hence perfect.	
	
\medskip
Suppose $P(r,n,k,s,q)$  is perfect, where $k=qr+1$. Let $N=n/(n,q)$. Then $P(r,n,k,s,q)$ is the free product of $(n,q)$ copies of $H(r,N,s)\cong H(s,N,r)$.  By Theorem \ref{theorem:MainResult1} we have $|r-s|=1$, and  either $r\equiv 0\bmod N$ or $s\equiv 0\bmod N$. Therefore, either $qr\equiv 0\bmod n$  or $qs\equiv 0\bmod n$. As $k\equiv qr+1\bmod n$, if   $qr\equiv 0\bmod n$, then $k\equiv 1\bmod n$. Suppose $qs\equiv 0\bmod n$. If $s=r+ 1$, then $q(r+1)\equiv 0\bmod n$, so  $k\equiv 1-q\bmod n$. Similarly, if $s=r- 1$, then $q(r-1)\equiv 0\bmod n$, so  $k\equiv 1+q\bmod n$, as required.
\end{proof}
We now prove Corollary  \ref{theorem:LOGgroup}.
\begin{proof}[Proof of Corollary  \ref{theorem:LOGgroup}]
It suffices by \cite[Theorem A]{williams2019generalized} to show that if $r\not\equiv 0\bmod n$ and $s\not\equiv 0\bmod n$, then the group $H(r/2,n/2,s/2)$ is not perfect. Suppose for contradiction that these conditions hold but $H(r/2,n/2,s/2)$ is perfect. Then in particular either $r/2\equiv 0\bmod n/2$ or $s/2\equiv 0\bmod n/2$ by Theorem \ref{theorem:MainResult1}, so  either $r\equiv 0\bmod n$ or  $s\equiv 0\bmod n$ which contradicts the assumptions above.
\end{proof}

\section{Groups of type $\tilde{\mathfrak{Z}}$}\label{sec:Generalcase}
In this section we prove Theorem \ref{theorem:MainResult2} by showing that the perfect Prishchepov groups of type $\tilde{\mathfrak{Z}}$ are closely related to the generalized Sieradski groups. But first, we mention some relevant results about the generalized Sieradski groups. 
\begin{theorem}(\cite[Corollary 3.2]{cavicchioli1998geometric}) \label{inf}
For $r\geq 2$, the generalized Sieradski group $S(r,n)$ is infinite if and only if $n\geq (4r-2)/(2r-3)$.
\end{theorem}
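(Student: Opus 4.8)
The plan is to identify $S(r,n)$ geometrically and then read off infiniteness from the geometry of an associated $2$-orbifold. First I would specialise the representer polynomial (\ref{eqn:representerpoly}) to $S(r,n)=P(r,n,2,r-1,2)$, where $k=2$, $s=r-1$ and $q=2$, obtaining
\begin{equation*}
f(t)=\sum_{i=0}^{r-1}t^{2i}-t\sum_{i=0}^{r-2}t^{2i}=\sum_{j=0}^{2r-2}(-t)^{j}=\frac{1+t^{2r-1}}{1+t}.
\end{equation*}
Evaluating via (\ref{order}) gives $f(1)=1$ and $f(-1)=2r-1$, so $S(r,n)^{ab}$ is finite for every $n$; in particular the abelianisation alone cannot detect infiniteness, and a finer argument is needed. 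The key observation is that $f(t)$ is exactly the Alexander polynomial of the $(2,2r-1)$-torus knot $T(2,2r-1)$.

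This suggests realising $S(r,n)$ as the fundamental group of the $n$-fold cyclic branched cover of $S^{3}$ over $T(2,2r-1)$, namely the Brieskorn manifold $\Sigma(2,2r-1,n)$, which is Seifert fibred over the base orbifold $S^{2}(2,2r-1,n)$. Concretely I would pass to the shift extension $E=\langle y,t\mid t^{n},\,y^{r}t^{-1}y^{-(r-1)}t^{-1}\rangle$ (here $A\equiv -1$ and $B\equiv 1 \bmod n$), which is a finite extension of $S(r,n)$, so $S(r,n)$ is infinite if and only if $E$ is; I would then transform this two-generator presentation by Tietze moves into the standard Seifert presentation exhibiting a central fibre $h$ together with three exceptional fibres of orders $2$, $2r-1$ and $n$. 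That $S(r,n)^{ab}$ is finite forces the Euler number of the fibration to be nonzero, so the total space is a rational homology sphere.

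With the Seifert structure in hand, the finiteness criterion is standard: $\pi_{1}$ of a closed Seifert fibred rational homology sphere is finite if and only if the base orbifold is spherical, i.e.\ if and only if
\begin{equation*}
\chi^{\mathrm{orb}}\bigl(S^{2}(2,2r-1,n)\bigr)=-1+\frac{1}{2}+\frac{1}{2r-1}+\frac{1}{n}>0.
\end{equation*}
A direct computation shows that for $r\geq 2$ this inequality is equivalent to $n<(4r-2)/(2r-3)$. Hence $S(r,n)$ is finite precisely when $n<(4r-2)/(2r-3)$, and infinite (with Euclidean base orbifold when $\chi^{\mathrm{orb}}=0$ and hyperbolic base otherwise) precisely when $n\geq (4r-2)/(2r-3)$, which is the claim.

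The main obstacle is the geometric identification in the second step: rigorously producing the Seifert presentation of $E$ from the cyclic presentation, or equivalently matching $S(r,n)$ with the branched-cover group $\pi_{1}(\Sigma(2,2r-1,n))$. If one prefers to avoid the topology and argue purely group-theoretically, the same difficulty reappears in two guises: for $n\geq (4r-2)/(2r-3)$ one must exhibit a surjection of $S(r,n)$ onto the infinite von Dyck triangle group $\Delta(2,2r-1,n)$, while for the finitely many smaller values of $n$ (only $n\le 5$ when $r=2$, only $n\le 3$ when $r=3$, and only $n=2$ when $r\geq 4$) one must identify $S(r,n)$ explicitly with a finite group. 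Either route ultimately encodes the same Seifert data, so I would carry out the identification once, geometrically, and deduce everything from the orbifold Euler characteristic.
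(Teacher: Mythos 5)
The paper does not prove this statement: it is quoted verbatim from \cite[Corollary 3.2]{cavicchioli1998geometric}, and the route you sketch --- identify $S(r,n)$ with $\pi_1$ of the Brieskorn manifold $M(n,2r-1,2)$, read off a Seifert fibration over $S^2(2,2r-1,n)$, and apply the orbifold Euler characteristic criterion --- is precisely the route of that cited source (the present paper even records the Brieskorn identification in the paragraph following the theorem). Your arithmetic at the end is correct: $-1+\tfrac12+\tfrac1{2r-1}+\tfrac1n>0$ is equivalent to $n<(4r-2)/(2r-3)$ for $r\geq 2$, and the $\chi^{\mathrm{orb}}=0$ boundary case ($r=2$, $n=6$) is correctly assigned to the infinite side. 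So as a reconstruction of the intended proof the plan is on target, but two issues remain.

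First, a concrete error: the claim that ``$S(r,n)^{ab}$ is finite for every $n$'' is false. You evaluated $f$ only at $\pm 1$. Since $(1+t)f(t)=1+t^{2r-1}$, the polynomial $f$ vanishes at every primitive $2d$th root of unity with $1<d\mid 2r-1$; whenever $2d\mid n$ for such a $d$ the determinant $\prod_{\lambda^n=1}f(\lambda)$ is zero and $S(r,n)^{ab}$ is infinite (e.g.\ $S(2,6)^{ab}$, via $\lambda=e^{i\pi/3}$, $\lambda^3=-1$; compare Lemma \ref{lem:perfectSieradski}, which already shows perfectness fails unless $(4r-2,n)=1$). Consequently your assertion that the Euler number is always nonzero, hence that the total space is always a rational homology sphere, is wrong. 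This does not sink the theorem --- infinite abelianization gives infiniteness for free --- but the step must be repaired: either treat that regime separately, or verify (as is true) that no such root of unity exists when $n<(4r-2)/(2r-3)$, so the rational-homology-sphere claim holds exactly where the finiteness criterion is invoked. Second, the gap you flag yourself is the real one: identifying $S(r,n)$ with $\pi_1(M(n,2r-1,2))$ and extracting the Seifert presentation is the main theorem of \cite{cavicchioli1998geometric} and rests on a Heegaard-diagram/spine argument, not on routine Tietze transformations of the shift extension. Until that identification is supplied, the proposal is a plan rather than a proof.
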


By the main result of \cite{cavicchioli1998geometric}, $S(r,n)$ corresponds to a spine of the $n$-fold cyclic covering of the $3$-sphere branched over the torus knot $K(2r-1,2)$, that is, the Brieskorn manifold $M(n,2r-1,2)$ in the sense of \cite{MR0418127}. By \cite{MR206972,MR1555366}, the manifold $M(n,2r-1,2)$ is an integer homology sphere  if and only if $(n,4r-2)=1$. It then follows from this  that $S(r,n), r\geq 2$ is perfect if and only if $(4r-2,n)=1$. An alternative but elementary proof of this result is given below.
 \begin{lem}\label{lem:perfectSieradski}
Let $n,r\geq 2$. Then $S(r,n)$ is perfect if and only if $(4r-2,n)=1$.
\end{lem}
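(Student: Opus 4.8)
The plan is to realize $S(r,n)=P(r,n,2,r-1,2)$ and read off its representer polynomial. Substituting $k=2$, $s=r-1$, $q=2$ into \eqref{eqn:representerpoly} gives
\[
f(t)=\sum_{i=0}^{r-1}t^{2i}-t\sum_{i=0}^{r-2}t^{2i}=\sum_{j=0}^{2r-2}(-1)^j t^j=\frac{t^{2r-1}+1}{t+1},
\]
the last equality being an identity in $\mathbb{Z}[t]$. In particular $f(1)=r-s=1$ and $f(-1)=2r-1$. Since $4r-2=2(2r-1)$, the condition $(4r-2,n)=1$ is equivalent to ``$n$ is odd and $(2r-1,n)=1$''. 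I would therefore split the argument according to the parity of $n$.

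First suppose $n$ is even. Then $(4r-2,n)\ge 2$, so the right-hand condition fails, and it remains to show $S(r,n)$ is not perfect. Applying the reduction $x_i\mapsto x_{i\bmod 2}$ exactly as in the proof of Lemma \ref{lem:gcd(n,q)dividesk-1} (legitimate since $2\mid(n,q)$) produces a surjection onto $P(r,2,2,r-1,2)$; here $q\equiv 0\bmod 2$ and $k-1=1$, so this group is $G_2(x_0^{r}x_1^{-(r-1)})\cong\mathbb{Z}_{|r^2-(r-1)^2|}=\mathbb{Z}_{2r-1}$ by \cite[Lemma 6]{bogley2017coherence}. As $2r-1\ge 3$, the group $S(r,n)$ maps onto a nontrivial abelian group and is therefore not perfect, so both sides of the equivalence are false.

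Now suppose $n$ is odd. Then $-1$ is not an $n$th root of unity, so $\lambda+1\neq 0$ whenever $\lambda^n=1$, and I may use $f(\lambda)=(\lambda^{2r-1}+1)/(\lambda+1)$ inside the order formula \eqref{order}:
\[
|S(r,n)^{ab}|=\frac{\prod_{\lambda^n=1}(\lambda^{2r-1}+1)}{\prod_{\lambda^n=1}(\lambda+1)}.
\]
Evaluating $\prod_{\lambda^n=1}(x-\lambda)=x^n-1$ at $x=-1$ gives $\prod_{\lambda^n=1}(\lambda+1)=2$ (using that $n$ is odd). For the numerator, set $d=(2r-1,n)$; the map $\lambda\mapsto\lambda^{2r-1}$ is $d$-to-$1$ from the $n$th roots of unity onto the $(n/d)$th roots of unity, so the numerator equals $\big(\prod_{\mu^{n/d}=1}(\mu+1)\big)^{d}=(1-(-1)^{n/d})^{d}=2^{d}$, since $n/d$ is odd. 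Hence $|S(r,n)^{ab}|=2^{d-1}$, which equals $1$ precisely when $d=1$, i.e. when $(2r-1,n)=1$, i.e. when $(4r-2,n)=1$.

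The hard part will be the bookkeeping in the even case together with the clean evaluation of the numerator product. One must resist applying the ratio $f(\lambda)=(\lambda^{2r-1}+1)/(\lambda+1)$ at $\lambda=-1$, where it is the indeterminate $0/0$, and instead extract the explicit nontrivial quotient $\mathbb{Z}_{2r-1}$ via the reduction of Lemma \ref{lem:gcd(n,q)dividesk-1}; the evaluation of $\prod_{\lambda^n=1}(\lambda^{2r-1}+1)$ through the $d$-to-$1$ covering $\lambda\mapsto\lambda^{2r-1}$ is the other step requiring care. I expect everything else to be routine consequences of the product formula \eqref{order}.
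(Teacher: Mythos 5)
Your proof is correct, but it takes a genuinely different route from the paper's. The paper argues entirely inside the ring $\mathbb{Z}[t]/\langle t^n-1\rangle$: it notes that $(1+t)f(t)=t^{2r-1}+1$, passes to the quotient $R$ by this element, where $t^{4r-2}=1$ and hence $t^{(4r-2,n)}=1$, and reads off the answer. You instead compute $|S(r,n)^{ab}|$ exactly: for odd $n$ the identity $\prod_{\lambda^n=1}(\lambda+1)=2$ together with the $d$-to-$1$ covering $\lambda\mapsto\lambda^{2r-1}$ gives $|S(r,n)^{ab}|=2^{(2r-1,n)-1}$, and for even $n$ you exhibit the explicit quotient $\mathbb{Z}_{2r-1}$ via the mod-$2$ index reduction, exactly as in Lemma \ref{lem:gcd(n,q)dividesk-1}. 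Your version is longer but buys more. It yields the precise order of the abelianization (recovering, e.g., $|S(2,3)^{ab}|=4$), and it is watertight at the one point where the paper's argument is delicate: since $1+t$ is never a unit of $\mathbb{Z}[t]/\langle t^n-1\rangle$ (its norm is $1-(-1)^n\in\{0,2\}$), the quotient by $\langle(1+t)f(t)\rangle$ is never the zero ring --- for $r=2$, $n=5$ it is $\mathbb{F}_2$ --- so the paper's asserted equivalence ``$f$ is a unit iff $R$ is the zero ring'' only delivers the easy implication $R=0\Rightarrow f$ a unit, and the forward direction of the lemma really does require the sort of case analysis you carry out. One cosmetic remark: the even case could be folded into the root-of-unity computation by evaluating the polynomial form of $f$ at $\lambda=-1$ (giving the factor $f(-1)=2r-1$ and a ring homomorphism onto $\mathbb{Z}_{2r-1}$) instead of the rational form, but your group-theoretic quotient is equally valid.
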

\begin{proof}
As mentioned in Section \ref{sec:Introdution} it is enough to show that the representer polynomial $f(t)$ of $S(r,n)^{ab}$, that is,
\begin{align*}
f(t)&=\sum_{i=0}^{r-1}t^{2i}-t\sum_{i=0}^{r-2}t^{2i}\\
&=(t^{2r-1}+1)/(t+1)
\end{align*}
 is a unit in the ring $\mathbb{Z}[t]/\langle t^n-1\rangle$. Note that $f(t)$ is a unit if and only if the quotient $R$ of $\mathbb{Z}[t]/\langle t^n-1\rangle$ by the ideal generated by $(1+t)f(t)$ is the zero ring. But $(1+t)f(t)=t^{2r-1}+1$, so in $R$, $t^{4r-2}=1$. Hence $R$ is the zero ring if and only if $(4r-2,n)=1$, as required.
\end{proof}

We now give a proof of Theorem \ref{theorem:MainResult2}. Recall that  $P(r,n,k,s,q)$ is of type $\tilde{\mathfrak{Z}}$ if either $2(k-1)\equiv q(r-s)\bmod n$, that is, it is  of type $\mathfrak{Z}$;  or $q(r+s)\equiv 0 \bmod n$, that is, it is of type $\mathfrak{Z}'$.

\begin{proof}[Proof of Theorem \ref{theorem:MainResult2}]
$(1)\implies (2)$. Let  $P(r,n,k,s,q)$ be a perfect group. Then  by Lemma \ref{lem:r-s} we have $|r-s|=1$. Since $r\geq s$ by assumption, we have $s=r-1$.  If $(n,q)>1$, then $(n,q)$ does not divide $k-1$ since $(n,q,k-1)=1$ by assumption, so $P(r,n,k,r-1,q)$ is not perfect by Lemma \ref{lem:gcd(n,q)dividesk-1}. Hence $(n,q)=1$.
Let $(n,k-1-qr)=N$. Then there is an epimorphism from $P(r,n,k,r-1,q)$ to $P(r,N,qr+1,r-1,q)$ sending $x_i$ to $x_{i\bmod N}$ $(0\leq i<n)$. Since $(n,q)=1$, the group $P(r,N,qr+1,r-1,q)$ is isomorphic to $H(r,N,r-1)$. Hence by Theorem \ref{theorem:MainResult1}, $P(r,N,qr+1,r-1,q)$ is perfect if and only if $r\equiv 0\bmod N$ or $r\equiv 1\bmod N$. 

\medskip
The type $\tilde{\mathfrak{Z}}$ condition implies that $2r\equiv 1\bmod n$ or $2(k-1)\equiv q\bmod n.$
Suppose first that $2r\equiv 1\bmod n$. Then $2r\equiv 1\bmod N$. Since $r\equiv 0\bmod N$ or  $r\equiv 1\bmod N$, we conclude that $N=1$. Also if the condition $2(k-1)\equiv q\bmod n$ holds, then   $2(k-1)\equiv q\bmod N$. But by assumption  $k-1\equiv qr\bmod N$. Hence  $q(2r-1)\equiv 0\bmod N$. Since $(n,q)=1$, we have that $2r-1\equiv 0\bmod N$. Therefore as before we obtain that  $N=1$. 

\medskip
$(2)\implies (1).$ Let the conditions $(n,q)=1$, $|r-s|=1$ and $(n,k-1-qr)=1$ hold. Suppose first that $2(k-1)\equiv q\bmod n$. Then $(n,q)=(n,2(k-1))=(n,k-1)=1$. So $n$ is odd and 
\begin{align*}
P(r,n,k,r-1,q)&\cong P(r,n,k,r-1,2(k-1))\\
&\cong P(r,n,2,r-1,2)\\
&\cong S(r,n).
\end{align*}
Hence, as $n$ is odd it suffices to show that $(n,2r-1)=1$  by Lemma \ref{lem:perfectSieradski}. This follows from the assumption $(k-1-qr,n)=1$ for the following reason. The condition $(n,2r-1)=1$ holds if and only if  $(n,(2r-1)(k-1))=1$ since $(k-1,n)=1$. But $(n,(2r-1)(k-1))=(n,k-1-qr)$, as required.

\medskip
On the other hand, suppose that $q(2r-1)\equiv 0\bmod n$. Since $(n,q)=1$ we have that $2r\equiv 1\bmod n$. As before we conclude that $n$ is odd. Also, the assumption $(n,q)=1$ implies that there exists integers $1\leq p,\bar{q}\leq n$ such that $\bar{q}q\equiv 1\bmod n$ and $\bar{q}(k-1)\equiv p\bmod n$. Hence $P(r,n,k,r-1,q)\cong P(r,n,p+1,r-1,1)$. Since $k-1\not\equiv 0,q\bmod n$ by assumption, we have that $2\leq p< n$. Hence 
$P(r,n,p+1,r-1,1)^{ab}\cong P(p,n,r+1,p-1,1)^{ab}$ by Lemma \ref{lem:isoofreppoly}. Therefore, we have
\begin{align*}
P(r,n,k,r-1,q)^{ab}&\cong P(r,n,p+1,r-1,1)^{ab}\\
&\cong P(p,n,r+1,p-1,1)^{ab}\\
&\cong P(p,n,r+1,p-1,2r)^{ab}\\
& \cong P(p,n,2,p-1,2)^{ab}\\
& \cong S(p,n)^{ab}.
\end{align*}
As $n$ is odd it suffices to show that $(n, 2p-1)=1$  by Lemma \ref{lem:perfectSieradski}. This follows from the assumption $(k-1-qr,n)=1$. To see why this is true notice that $(n,2p-1)=1$ if and only if $(n,qr(2p-1))=1$ since $2r\equiv 1\bmod n$ and $(n,q)=1$. Using $\bar{q}(k-1)\equiv p\bmod n$, we have that   $(n,qr(2p-1))=(n,2r(k-1)-qr)=(n,k-1-qr)$, as required. 
\end{proof}
We now prove Corollary \ref{cor:ToMainResult2}

\begin{proof}[Proof of Corollary \ref{cor:ToMainResult2}]
$(1)\implies (2).$ Let $P(r,n,k,s,q)$ be the trivial group. Then $s=r-1$ by Lemma \ref{lem:r-s}. 
Since $P(r,n,k,s,q)$ is  of type $\mathfrak{Z}$  by assumption, it follows from the proof of Theorem \ref{theorem:MainResult2} that $P(r,n,k,s,q)\cong S(r,n)$. First suppose that $n\geq 3$. Then by Lemma \ref{lem:perfectSieradski}  $P(r,n,k,s,q)$ is infinite if $r\geq 3$ or $n\geq 6$. Hence we can assume that $r=2$ and $n=3,4,5$. These give rise to finite groups of order $8,24,120$ respectively. Therefore $n=2$. Hence, either $k\equiv 1\bmod n$ or $k\equiv 2\bmod n$, as required.

\medskip
$(2)\implies (1).$ This is by Lemma \ref{lem:Trivialgroup}.
\end{proof}
\section{On Theorem \ref{theorem:MainResult3}}\label{sec:SEC3}
We deduce Theorem \ref{theorem:MainResult3}  from  Corollary \ref{cor:isomorphicab} as follows.
\begin{proof}[Proof of Theorem \ref{theorem:MainResult3}]
Let $P(\alpha n+r,n,k,\beta n+r-1,q)$ be a perfect group and let $(n,q,k-1)=1$. By Lemma \ref{lem:gcd(n,q)dividesk-1} we have $k-1\equiv 0\bmod (n,q)$. It then follows from  the assumption $(n,q,k-1)=1$ that $(n,q)=1$. Hence by Corollary \ref{cor:isomorphicab},  we have 
\[|P(\alpha n+r,n,k,\beta n+r-1,q)^{ab}|=|(\alpha-\beta)n+1|\times |P(r,n,k,r-1,q)^{ab}|.\]
Therefore, $\alpha=\beta$ and $P(r,n,k,r-1,q)$ is perfect. The converse direction is straightforward.
\end{proof}

The following classification of perfect groups $G_n(q, k-1)=P(2,n,k,1,q)$ provides some support for  Conjecture \ref{conj:krforperfect}.
\begin{theorem}[\cite{odoni1999some,williams2010unimodular}]\label{theorem:Willuni}
Suppose $(n, q, k-1) = 1$. Then $G_n(q, k-1)$ is perfect if and only if ($(n,6)=1$ and $q\equiv 2(k-1)\bmod n$) or $k\equiv 1\bmod n$ or $k\equiv 1+ q\bmod n$.
\end{theorem}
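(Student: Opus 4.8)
The plan is to work, as in the proof of Lemma~\ref{lem:perfectSieradski}, with the representer polynomial and the criterion that $G_n(q,k-1)=P(2,n,k,1,q)$ is perfect if and only if its representer polynomial is a unit in $R=\mathbb{Z}[t]/\langle t^n-1\rangle$, equivalently $\bigl|\prod_{\lambda^n=1}f(\lambda)\bigr|=1$, by (\ref{order}) and \cite[Theorem 4, page 78]{johnson1980topics}. Specialising (\ref{eqn:representerpoly}) to $r=2$, $s=1$ gives
\begin{equation*}
f(t)=1+t^{q}-t^{k-1}.
\end{equation*}
Writing $b\equiv(k-1)\bmod n$, I would first dispose of the two easy implications: if $b\equiv 0$ then $f(t)=t^{q}$ and if $b\equiv q$ then $f(t)=1$, so in both cases $f$ is a unit of $R$ and the group is perfect. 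These are precisely the cases $k\equiv 1\bmod n$ and $k\equiv 1+q\bmod n$.

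For the remaining sufficiency direction, assume $(n,6)=1$ and $q\equiv 2(k-1)\bmod n$. Then in $R$ we may substitute $t^{q}=t^{2b}$, so
\begin{equation*}
f(t)=1-t^{b}+t^{2b}=(t^{b}-\zeta_6)(t^{b}-\zeta_6^{-1}),
\end{equation*}
where $\zeta_6=e^{i\pi/3}$ is a primitive sixth root of unity. Setting $d=(b,n)$ and $N=n/d$, the map $\lambda\mapsto\lambda^{b}$ sends the $n$th roots of unity $d$-to-one onto the $N$th roots of unity, so
\begin{equation*}
\prod_{\lambda^n=1}f(\lambda)=\Bigl[(\zeta_6^{N}-1)(\zeta_6^{-N}-1)\Bigr]^{d}=\bigl(2-2\cos(N\pi/3)\bigr)^{d}.
\end{equation*}
Since $(n,6)=1$ and $N\mid n$, we have $N\equiv\pm 1\bmod 6$, whence $\cos(N\pi/3)=\tfrac12$ and the product equals $1$. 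Thus $G_n(q,k-1)^{ab}=1$ and the group is perfect. Note that no coprimality hypothesis on $q$ is needed here, since the substitution $t^{q}=t^{2b}$ is valid in $R$ regardless.

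The substantial content is the necessity (``only if'') direction, which I expect to be the main obstacle. Suppose $G_n(q,k-1)$ is perfect. As $(n,q,k-1)=1$, Lemma~\ref{lem:gcd(n,q)dividesk-1} gives $(n,q)\mid(k-1)$, forcing $(n,q)=1$; hence by \cite[Theorem 16]{williams2012largeness} we may reduce to $q=1$ via $P(2,n,k,1,q)\cong P(2,n,\hat q(k-1)+1,1,1)$ with $q\hat q\equiv 1\bmod n$, replacing $f$ by the trinomial $1+t-t^{j}$ where $j\equiv\hat q(k-1)\bmod n$. Under this reduction the three target conditions become $j\equiv 0$, $j\equiv 1$, and ($2j\equiv 1\bmod n$ with $(n,6)=1$), so the goal is to show that $\bigl|\prod_{\lambda^n=1}(1+\lambda-\lambda^{j})\bigr|=1$ forces one of these. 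One checks that when $2j\equiv 1\bmod n$ the trinomial again collapses to $1-t^{j}+t^{2j}$, so the closed form above applies and shows the product is $\bigl(2-2\cos(N\pi/3)\bigr)^{d}$, which is $1$ exactly when $(n,6)=1$ and exceeds $1$ otherwise; this pins down the third case and rules out $2j\equiv 1$ with $(n,6)\neq 1$.

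The genuinely hard part is to show there are no \emph{further} values of $j$: for a general trinomial $1+t-t^{j}$ there is no such substitution, and one must prove directly that $\bigl|\prod_{\lambda^n=1}(1+\lambda-\lambda^{j})\bigr|>1$. Writing this product as a resultant, $\prod_{\lambda^n=1}(1+\lambda-\lambda^{j})=\pm\prod_{\beta}(\beta^{n}-1)$ over the roots $\beta$ of the trinomial, the unit condition severely constrains the archimedean and $p$-adic sizes of these roots, forcing them into a cyclotomic configuration that cannot occur outside the listed cases. This is exactly the unimodularity-of-circulants and Diophantine analysis carried out in \cite{odoni1999some,williams2010unimodular}, and I would follow their treatment for this step; the crux, and the reason the theorem is attributed to those works, is the control of the heights (Mahler measures) of the roots of $1+t-t^{j}$.
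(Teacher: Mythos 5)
The statement you are proving is not proved in the paper at all: it is imported verbatim from \cite{odoni1999some,williams2010unimodular}, so there is no in-paper argument to match yours against. Your reconstruction of the ``if'' direction is correct and self-contained: the specialisation of (\ref{eqn:representerpoly}) to $f(t)=1+t^q-t^{k-1}$, the two degenerate cases $k\equiv 1$ and $k\equiv 1+q \bmod n$, and the substitution $t^q=t^{2(k-1)}$ in $\mathbb{Z}[t]/\langle t^n-1\rangle$ leading to $\prod_{\lambda^n=1}f(\lambda)=\bigl(2-2\cos(N\pi/3)\bigr)^d$ are all sound, and this computation is a close cousin of the paper's own elementary proof of Lemma \ref{lem:perfectSieradski} (there the unit criterion in $\mathbb{Z}[t]/\langle t^n-1\rangle$ is used directly via $(1+t)f(t)=t^{2r-1}+1$; your root-of-unity product via (\ref{order}) is an equivalent route). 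Your reduction to $q=1$ via Lemma \ref{lem:gcd(n,q)dividesk-1} and \cite[Theorem 16]{williams2012largeness} is also the standard normalisation the paper itself uses in the introduction.

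The one thing to be clear-eyed about is that the ``only if'' direction --- showing that $\bigl|\prod_{\lambda^n=1}(1+\lambda-\lambda^{j})\bigr|>1$ for every $j$ outside the three listed congruence classes --- is the entire mathematical content of the theorem, and your proposal does not prove it; it correctly identifies it as the Mahler-measure/unimodular-circulant analysis of \cite{odoni1999some,williams2010unimodular} and defers to those works. That is a legitimate stance here precisely because the paper attributes the theorem to those references and does the same, but it means your text should be read as a verification of sufficiency plus a citation for necessity, not as an independent proof. If you intended a genuinely self-contained argument, the gap is exactly where you located it: no substitution collapses the general trinomial $1+t-t^{j}$, and ruling out sporadic units requires the Diophantine control of the roots carried out in the cited papers.
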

By combining Theorem \ref{theorem:Willuni} and  Lemma \ref{lem:isoofreppoly} we deduce the following.
 \begin{cor}
Let $n\geq 2$ and $r\geq 1$ be integers. The group $P(r,n,3,r-1,1)$ is perfect if and only if $(6,n)=1$ and $2r\equiv 1\bmod n$, or $r\equiv 0\bmod n$, or $r\equiv 1\bmod n$.
\end{cor}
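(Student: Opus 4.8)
The plan is to reduce the group $P(r,n,3,r-1,1)$ to a Fibonacci-type group $G_n(q,k-1)$ whose perfectness is already classified by Theorem \ref{theorem:Willuni}, transporting that classification across the abelianization isomorphism of Lemma \ref{lem:isoofreppoly}. The guiding observation is that a group is perfect precisely when its abelianization is trivial, so perfectness is an invariant of the abelianization alone; since Lemma \ref{lem:isoofreppoly} supplies an isomorphism of abelianizations, it suffices to read off the answer on the other side of that isomorphism.

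First I would specialize Lemma \ref{lem:isoofreppoly} to $k=3$ (the hypotheses $n\geq 2$ and $k=3\geq 2$ hold), which gives
\[
P(r,n,3,r-1,1)^{ab}\cong P(2,n,r+1,1,1)^{ab}.
\]
Next I would identify the group on the right with a Fibonacci-type group in the notation of Theorem \ref{theorem:Willuni}: putting $q=1$ and $k=r+1$ into $G_n(q,k-1)=P(2,n,k,1,q)$ yields $P(2,n,r+1,1,1)=G_n(1,r)$. Because perfectness depends only on the abelianization, it follows that $P(r,n,3,r-1,1)$ is perfect if and only if $G_n(1,r)$ is perfect. I would also note that the standing hypothesis $(n,q,k-1)=(n,1,r)=1$ of Theorem \ref{theorem:Willuni} is automatic since $q=1$, so the theorem applies with no extra constraint on $r$ and $n$.

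Finally I would translate the conditions of Theorem \ref{theorem:Willuni} under the substitution $q=1$, $k-1=r$. The three alternatives of that theorem --- first that $(n,6)=1$ together with $q\equiv 2(k-1)\bmod n$, second that $k\equiv 1\bmod n$, and third that $k\equiv 1+q\bmod n$ --- become respectively $(6,n)=1$ with $2r\equiv 1\bmod n$, then $r\equiv 0\bmod n$, and then $r\equiv 1\bmod n$, which are exactly the conditions in the statement.

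I would flag the only genuine subtlety, the boundary case $r=1$, where $s=r-1=0$ and $P(1,n,3,0,1)$ is the trivial (hence perfect) group, consistent with the alternative $r\equiv 1\bmod n$; this case is already absorbed by Lemma \ref{lem:isoofreppoly}, whose proof treats $r=1$ explicitly, so no separate argument is needed. I do not expect a substantive obstacle here: the proof is essentially mechanical, and the only care required lies in the bookkeeping between the two parametrizations of the Fibonacci-type groups and in checking that the hypotheses of both cited results are met.
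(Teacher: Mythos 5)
Your proposal is correct and is essentially the paper's own proof: both reduce $P(r,n,3,r-1,1)$ to $P(2,n,r+1,1,1)=G_n(1,r)$ via Lemma \ref{lem:isoofreppoly} and then invoke Theorem \ref{theorem:Willuni}, with the translation of conditions carried out exactly as you describe. The only cosmetic difference is that the paper disposes of the case $r=1$ up front (trivial group by inspection) before applying the lemma, whereas you observe that the lemma's proof already covers it; either is fine.
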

 \begin{proof}
 If $r=1$, then $P(r,n,3,r-1,1)$ is trivial, so perfect. Hence, we assume that $r\geq 2$. By Lemma \ref{lem:isoofreppoly} the group $P(r,n,3,r-1,1)$ is perfect if and only if $P(2,n,r+1,1,1)$ is perfect.  The result follows by Theorem \ref{theorem:Willuni}.
 \end{proof}
The next result extends Theorem \ref{theorem:Willuni} to the case where $r\equiv 2\bmod n$, and  classifies the trivial groups in this setting. 
\begin{cor}\label{cortheorem:last}
Let $k,q\geq 1$, $\alpha,\beta\geq 0$  and $n\geq 2$ be integers such that $(n,k-1,q)=1$. Then
\begin{enumerate}
\item $P(\alpha n+2,n,k,\beta n+1,q)$  is perfect if and only if $\alpha=\beta$; and (i) $k \equiv 1\bmod n$; or (ii) $k\equiv 1+ q\bmod n$ or (iii) $((n,6)=1$ and $q\equiv 2(k-1) \bmod n)$.
\item $P(\alpha n+2,n,k,\beta n+1,q)$  is trivial if and only if $\alpha=\beta$ and either $k \equiv 1\bmod n$ or $k\equiv 1+ q\bmod n$.
\end{enumerate}
\end{cor}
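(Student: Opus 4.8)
The plan is to obtain both parts as essentially immediate consequences of the reduction in Theorem \ref{theorem:MainResult3} combined with the known classification of perfect $G_n(q,k-1)=P(2,n,k,1,q)$ in Theorem \ref{theorem:Willuni}, reserving Lemma \ref{lem:Trivialgroup} and Corollary \ref{cor:ToMainResult2} for the triviality statement. The whole corollary is really the specialization $r=2$ of the preceding machinery, so the work is organizational rather than computational.

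For part (1), I would note that $P(\alpha n+2,n,k,\beta n+1,q)$ is exactly $P(\alpha n+r,n,k,\beta n+r-1,q)$ with $r=2$. Since $(n,k-1,q)=1$ is assumed and $1\leq 2\leq n$, Theorem \ref{theorem:MainResult3} applies and says this group is perfect if and only if $\alpha=\beta$ and $P(2,n,k,1,q)$ is perfect. As $P(2,n,k,1,q)=G_n(q,k-1)$, Theorem \ref{theorem:Willuni} (whose hypothesis $(n,q,k-1)=1$ is our standing assumption) identifies perfectness of the latter with the disjunction of (i) $k\equiv 1\bmod n$, (ii) $k\equiv 1+q\bmod n$, and (iii) ($(n,6)=1$ and $q\equiv 2(k-1)\bmod n$). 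This gives part (1) at once.

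For part (2), the backward direction is the easy one: assuming $\alpha=\beta$, the parameters $r=\alpha n+2$ and $s=\alpha n+1$ satisfy $|r-s|=1$, and conditions (i), (ii) are precisely the two hypotheses $k\equiv 1\bmod n$ and $k\equiv 1+q(r-s)\bmod n$ of Lemma \ref{lem:Trivialgroup}, so triviality follows. For the forward direction, suppose the group is trivial; then it is perfect, so part (1) already forces $\alpha=\beta$ and one of (i), (ii), (iii). If (i) or (ii) holds we are done, so the only case to settle is that (iii) holds while (i), (ii) fail. The key observation is that when $\alpha=\beta$ we have $r-s=1$, so (iii) reads $q(r-s)\equiv 2(k-1)\bmod n$, which is exactly the type $\mathfrak{Z}$ condition. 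Hence Corollary \ref{cor:ToMainResult2} applies to our (trivial, hence perfect) group and asserts that triviality is equivalent to $|r-s|=1$ together with $k\equiv 1\bmod n$ or $k\equiv 1+q(r-s)\equiv 1+q\bmod n$, i.e.\ (i) or (ii); since the group is trivial by hypothesis, one of (i), (ii) must hold, completing the argument.

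I do not expect a serious obstacle, since once Theorem \ref{theorem:MainResult3} is available the proof is bookkeeping. The one point needing care is the forward direction of (2): I must check that (iii) together with $\alpha=\beta$ genuinely places the group in type $\mathfrak{Z}$ before invoking Corollary \ref{cor:ToMainResult2}, and that the corollary's hypothesis $(n,k-1,q)=1$ coincides with the standing assumption. As a self-contained alternative to Corollary \ref{cor:ToMainResult2}, I could instead observe that under (iii) one has $(n,q)=1$ (any common divisor of $n$ and $q$ divides $2(k-1)$ and, being coprime to $6$, divides $k-1$, hence divides $(n,k-1,q)=1$); the type $\mathfrak{Z}$ isomorphism from the proof of Theorem \ref{theorem:MainResult2} then identifies the group with the generalized Sieradski group $S(\alpha n+2,n)$, and Theorem \ref{inf} with Lemma \ref{lem:perfectSieradski} shows this group is infinite in every case except $S(2,5)$, which has order $120$, so it is always nontrivial, again forcing (i) or (ii).
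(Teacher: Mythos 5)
Your proof is correct and follows essentially the same route as the paper: part (1) is exactly the combination of Theorem \ref{theorem:MainResult3} with Theorem \ref{theorem:Willuni}, and for part (2) the paper handles case (iii) precisely as in your ``self-contained alternative'', identifying the group with $S(\alpha n+2,n)$ and using Theorem \ref{inf} to see that only $S(2,5)$ (of order $120$) is finite, hence none is trivial. Your primary route via Corollary \ref{cor:ToMainResult2} is a valid shortcut but amounts to the same underlying argument, since that corollary is itself proved by the same Sieradski-group identification.
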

\begin{proof}
Part 1 is immediate from a combination of Theorem \ref{theorem:MainResult3} and Theorem \ref{theorem:Willuni}. For part 2, suppose that $P(\alpha n+2,n,k,\beta n+1,q)$ is the trivial group. Then by part 1 we have that $\alpha=\beta$ and either $k \equiv 1\bmod n$ or $k\equiv 1+ q\bmod n$,  so the group is trivial by Lemma \ref{lem:Trivialgroup};  or  $((n,6)=1$ and $q\equiv 2(k-1))$. In the latter case the group is  isomorphic to $S(\alpha n+2,n)=P(\alpha n+2,n,2,\alpha n+1,2)$ as $(n,q,k-1)=1$ by assumption.  
Since $(n,6)=1$, it follows that $n\geq 5$. Therefore, by Theorem \ref{inf} if $S(\alpha n+2,n)$ is finite, then $\alpha=0$ and $n=5$, that is, the group $S(2,5)$, which is not the trivial group.
\end{proof}

\section{Conclusion}\label{sec:conclusion}
In this article, we dealt with the question of when cyclically presented groups of type $\mathfrak{F}$ are perfect or trivial. We showed that to answer this question it is enough to restrict to the sub-family of Prishchepov groups. In terms of perfectness, a complete classification was given for the Prishchepov groups of type $\tilde{\mathfrak{Z}}$ and Conjecture \ref{conj:krforperfect} implies that this is sufficient. In other words, a proof of Conjecture \ref{conj:krforperfect}, together with Theorem \ref{theorem:MainResult2}, will yield the classification of perfect cyclically presented groups of type $\mathfrak{F}$.

\medskip
Some evidence was provided in support of Conjecture \ref{conj:krforperfect}. Also, we have and verified the conjecture computationally for $n\leq 340$. Except for the trivial groups $P(r,n,k,s,q)$ which occur when the conditions of Lemma \ref{lem:Trivialgroup} hold, the pieces of evidence in this article supporting Conjecture \ref{conj:krforperfect} imply that if $(n,k-1,q)=1$ and $P(r,n,k,s,q)$ is the trivial group, then the condition $2r\equiv 1\bmod n$ must hold. Hence it is natural to ask the following question.
\begin{quest}\label{quest:LAST}
Let $2<k\leq n$ and $r\geq 4$ be integers. Is it true that  $P(r,n,k,r-1,1)$ is not the trivial group?
\end{quest}
Note that  Question \ref{quest:LAST} has a negative answer if we allow $r=3$. Indeed, it can be checked in GAP \cite{GAP4} that the groups $P(3,5,3,2,1)$ and $P(3,5,5,2,1)$ are trivial. These groups are the only counter-examples we know that stop the conditions in Lemma \ref{lem:Trivialgroup} from being necessary for $P(r,n,k,s,q)$ to be the trivial  group, assuming $(n,k-1,q)=1$. We expect an affirmative answer to Question \ref{quest:LAST}, and this  is supported by results in this article, and also by the list of trivial groups given in \cite{MR2658419,MR3223773}. Since every  perfect (in particular trivial) group $P(r,n,k,r-1,q)$, such that $(n,k-1,q)=1$ holds, is isomorphic to one of the form $P(r,n,K,r-1,1)$, given Theorem \ref{theorem:MainResult2}, a proof of Conjecture \ref{conj:krforperfect} together with an affirmative answer to Question \ref{quest:LAST}, will yield a complete classification of trivial groups of type $\mathfrak{F}$.

\section*{Acknowledgements}

The first author's research is supported by the Leverhulme Trust Research Project Grant RPG-2017-334. The first author thanks Chimere Anabanti for the encouraging discussions at the onset of this work. Both authors thank Jim Howie and Gerald Williams for reading through earlier drafts of this manuscript and making useful suggestions, and the referee for insightful observations that led to improvements in the article. 

\bibliographystyle{plain}
\bibliography{ReferencesforPerfectPrishchepovGroups}
\end{document}